\pgfplotsset{compat=newest}
\definecolor{blue-green}{rgb}{0.0, 0.87, 0.87}
\definecolor{OwlYellow}{RGB}{ 242, 147,  24}
\definecolor{OwlRed}{RGB}{255,92,168}
\definecolor{OwlGreen}{RGB}{90,168,0}
\definecolor{OwlBlue}{RGB}{0,152,233}
\definecolor{arylideyellow}{rgb}{0.91, 0.84, 0.42}
\definecolor{aureolin}{rgb}{0.99, 0.93, 0.0}
\definecolor{wenge}{rgb}{0.39, 0.33, 0.32}
\definecolor{sealbrown}{rgb}{0.2, 0.08, 0.08}
\definecolor{salmonpink}{rgb}{1.0, 0.57, 0.64}
\definecolor{americanrose}{rgb}{1.0, 0.01, 0.24}
\definecolor{aquamarine}{rgb}{0.5, 1.0, 0.83}
\definecolor{babyblue}{rgb}{0.54, 0.81, 0.94}
\newtheorem{theorem}{Theorem}[section]
\newtheorem{lemma}[theorem]{Lemma}
\newtheorem{proposition}[theorem]{Proposition}
\newtheorem{problem}{Problem}
\newtheorem{corollary}[theorem]{Corollary}
\theoremstyle{definition}
\newtheorem{definition}[theorem]{Definition}
\theoremstyle{remark}
\newtheorem{remark}[theorem]{Remark}
\theoremstyle{definition}
\newtheorem{example}[theorem]{Example}
\newcommand\remove[1]{}
\def\f2{\mathbb{F}_2}
\newcommand{\diam}{{\rm diam}\hskip0.02cm}
\newcommand{\co}{\mathrm{c}_0}
\begin{document}

\title{H\"older-contractive mappings, nonlinear extension problem and fixed point free results}

\author{Cleon S. Barroso}

\date{}
\maketitle

\noindent{\bf Abstract:} For a bounded closed convex set $K$, in this note, we study the FPP for $\alpha$-H\"older nonexpansive maps, i.e. mappings $T\colon K\to K$ for which $\|T x -Ty\| \leq\| x - y\|^\alpha$ for all $x, y\in K$, $\alpha\in (0,1)$. First, we note that only finite-dimensional spaces have the H\"older-FPP. Moreover, the unit ball $B_X$ of any infinite-dimensional space fails the FPP for H\"older maps with $\mathrm{d}(T, B_X)>0$, where $\mathrm{d}(T, K)$ denotes the minimal displacement of $T$. We further show that reflexivity and weak sequential continuity are sufficient conditions to capture fixed points of H\"older-Lipschitz maps with bounded orbits. Next we focus on the existence of fixed point free $\alpha$-H\"older  maps $T\colon K\to K$ with $\mathrm{d}(T, K)\leq \varphi(\alpha)$ where either $\varphi(\alpha)=0$ or $\varphi(\alpha)\to 0$ as $\alpha \to 1$. Interesting results are obtained for the spaces $\mathrm{c}$, $\co$, $\ell_1$ and $\ell_2$, and also for $L_p$-spaces with $p\in[ 1, \infty]$. We also study the problem in spaces containing copies of $\co$ and $\ell_1$. Some questions are left open. 

\medskip

\begin{large}


\section{Introduction}

Let $(X,\|\cdot\|)$ be a Banach space. A bounded closed convex subset $K$ of $X$ is said to have the {\it fixed point property} (FPP) when every nonexpansive (i.e., $1$-Lipschitz) mapping $T\colon K\to K$ has at least one fixed point. Also, $X$ is said to have the FPP when every such $K$ has it. The FPP has its genesis in 1965, when F. Browder \cite{Brd1, Brd2}, D. G\"ohde \cite{Goh} and W. Kirk \cite{Ki} proved the first positive results for Hilbert spaces, uniformly convex spaces and, more broadly, reflexive spaces with the normal structure. Since then its connections with various aspects of Banach space theory has attracted plenty of interest from a number of sources. The current state of the art is the result of many fundamental contributions, including Sine \cite{Si}, Soardi \cite{So}, Karlovitz \cite{Kar}, Maurey \cite{M}, Alspach \cite{A}, Dowling, Lennard and Turett \cite{DLT, DLT4}, Benavides \cite{D-B}, Lin \cite{Lin, Lin2}. We also refer to \cite{D-B,Ben-Japon, DLT0, DLT, DLT1, DLT2,DLT4, vD, H-L, HL-MJP,KS, Lin} as well as the references therein for several further important advances. 

The FPP has developed in several directions including two major branches. One is directly focused on properties of underlying spaces for which one can verify the FPP, while the other emphasizes studies on the FPP or weak-FPP for different classes of maps. The literature surrounding these branches is vast, and we refer the reader to \cite{GK} for a good monograph on them. We also refer for instance to the papers \cite{D-B, Lin, Lin2} for fundamental contributions related to the connections between reflexivity and FPP, and the works \cite{Ki, Ki2} concerning geometric aspects related to the FPP for the class of asymptotically nonexpansive mappings. 

\medskip 
In this note we consider the class of H\"older-Lipschitz type maps. 

\begin{definition}\label{def:1} Let $K$ be a nonempty subset of a metric space $(X, \rho)$. For $\alpha, L \in (0,\infty)$ and $\lambda \in (0,1)$, a mapping $T\colon K\to  X$ is called
\begin{itemize}
\item[\it{(1)}] $\alpha$-H\"older $\lambda$-contractive if $\rho(Tx, Ty)\leq \lambda \rho^\alpha(x, y)$ for all $x,y \in K$. 
\item[\it{(2)}] $\alpha$-H\"older nonexpansive if $\rho(Tx, Ty)\leq \rho^\alpha(x, y)$ for all $x, y\in K$. 
\item[\it{(3)}] $\alpha$-H\"older $L$-Lipschitz with $L>1$, if $\rho(Tx, Ty)\leq L \rho^\alpha(x, y)$ for all $x,y \in K$. 
\item[\it{(4)}] Asymptotically $\alpha$-H\"older nonexpansive if there is a decreasing null sequence $(\kappa_n)_n$ in $(0,1]$ so that $\rho(T^nx, T^ny)\leq (1 + \kappa_n) \rho^\alpha(x, y)$ for all  $x, y\in K,\, n\in \mathbb{N}$.
\item[\it{(5)}] Uniformly $\alpha$-H\"older $L$-Lipschitz (resp. nonexpansive, contractive) if
$L>1$ (resp. $L=1, L<1$) and $\rho(T^nx, T^ny)\leq L \rho^\alpha(x, y)$ for all $x, y\in K$,\, $n\in \mathbb{N}$.
\end{itemize}
\end{definition}

Clearly if $\alpha=1$ we have the maps that have received the most attention in the literature. The aim of this work is to explore the case $\alpha<1$, which is overall more interesting. In fact, when $\alpha>1$ and $K$ has finite diameter all $\alpha$-H\"older $L$-Lipschitz maps on $K$ are $L\diam(K)^{\alpha -1}$-Lipschitz in classical sense. In addition, when $K$ is a convex set in a Banach space $X$ this class of mappings consists only of constant maps (Proposition \ref{R:3sec2}). Moreover, it can be proven that if $K$ is a hyperconvex metric spaces with $\diam(K)\leq 2$ and $T\colon K\to K$ is a fixed-point free $\alpha$-H\"older nonexpansive mapping then necessarily $\alpha<1$ (cf. Proposition \ref{R:8sec3}). 

\medskip 

For a convex set $C\subset X$ let $\mathcal{B}(C)$ denote the family of all nonempty bounded closed convex subsets of $C$. Given any set $K\subset X$ we shall denote by $\digamma(T)$ and $\mathrm{d}(T,K)$, respectively, the fixed point set and the minimal displacement of a map $T\colon K\to K$. That is,
\[
\digamma(T)= \big\{ x\in K \,\colon \, x =Tx\big\} \quad \&\quad \mathrm{d}(T, K) = \inf_{x\in K}\| x - Tx\|.
\]

As far as we know, the first work to investigate the FPP for H\"older-type maps was \cite{Ki3}. In this work, W. Kirk called a mapping $T\colon K\to K$ as $h$-nonexpansive ($h>0$) if
\[
\| T(x) - T(y)\| \leq \max\{ h, \| x -y\|\}\quad\text{for all } x, y\in K.
\]
It is clear that $\alpha$-H\"older nonexpansive maps ($0<\alpha<1$) are $1$-nonexpansive in the above sense. In \cite[Theorem 2.3]{Ki3}, he proved under certain conditions that  $\mathrm{d}(T,K)\leq h$. As another interesting work, A. Wi\'snicki \cite{Wi} had shown under some geometric conditions (cf. \cite[Theorem 4.2]{Wi}) that if $K\in \mathcal{B}(X)$ is a   weakly compact set, then the fixed point set of asymptotically regular semigroups acting on $K$ are H\"older-Lipschitz retracts.

\medskip 

In analogy with the FPP, it is natural to consider the H\"older-FPP. Precisely, we say a set $K\in \mathcal{B}(X)$ has the $\alpha$-HFPP ($\alpha<1$) if any $\alpha$-H\"older nonexpansive mapping on $K$ has a fixed point. Similar definition for the whole space $X$ also applies. 

\smallskip 

As a further motivation for considering this property, we note that $\alpha$-HFPP implies the FPP (cf. Theorem \ref{R:6sec3}). Also, if $K$ is a convex subset of $B_X$ (the closed unit ball of $X$) and $T\colon K\to K$ is nonexpansive then (cf. Proposition \ref{R:1sec4}) for every $\alpha, \varepsilon\in (0,1)$ there is a $\alpha$-H\"older $(\varepsilon + \diam(K)^{1-\alpha})$-Lipschitzian map $T_\varepsilon \colon K\to K$ with  $\digamma(T_\varepsilon)=\digamma(T)$, $\mathrm{d}(T_\varepsilon, K) \leq \mathrm{d}(T, K)$ and $\| T(x) - T_\varepsilon(x)\| \leq \varepsilon$ for all $x\in K$. 

\medskip 

It turns out however that only finite-dimensional spaces satisfy the $\alpha$-HFPP. 

\smallskip 

\begin{proposition}\label{prop:1} Let $X$ be a Banach space and $\alpha\in (0,1)$. Then:
\begin{itemize}
\item[(i)] $X$ has the $\alpha$-HFPP if and only if it is finite dimensional. 
\item[(ii)] If $X$ is infinite-dimensional then there exists a $\alpha$-H\"older nonexpansive mapping $T\colon B_X\to B_X$ with $\digamma(T)=\emptyset$ and $\mathrm{d}(T, B_X)>0$. 
\end{itemize}
\end{proposition}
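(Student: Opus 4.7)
The statement splits cleanly. For (i), if $\dim X <\infty$ every $K\in\mathcal{B}(X)$ is norm compact and every $\alpha$-H\"older nonexpansive map is continuous, so Brouwer's theorem delivers a fixed point. The reverse implication is the contrapositive of (ii) applied with $K=B_X$, so the substantive work is in (ii).

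For (ii), I would construct $T$ in three steps exploiting the non-compactness of $B_X$. First, fix a fixed point free $L$-Lipschitz self-map $S\colon B_X\to B_X$ with $d(S,B_X)=c>0$; such $S$ exists in any infinite-dimensional $X$, for instance by taking $S=-r$ for a Benyamini--Sternfeld Lipschitz retraction $r\colon B_X\to S_X$, or via Nowak-type constructions. Second, for a small parameter $\rho\in(0,1)$ rescale to $S_\rho(x)=\rho S(x/\rho)$, a fixed point free $L$-Lipschitz self-map of $\rho B_X$ with displacement $\rho c$, and apply a Lipschitz version of Proposition~\ref{R:1sec4} on $K=\rho B_X$ to obtain $\widetilde S_{\rho,\varepsilon}$ that is $\alpha$-H\"older with constant at most $\varepsilon+L(2\rho)^{1-\alpha}$, with the same empty fixed point set, and displacement at least $\rho c-\varepsilon$. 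Third, precompose with the $2$-Lipschitz radial retraction $r_\rho\colon B_X\to\rho B_X$ (defined by $r_\rho(x)=x$ on $\rho B_X$ and $r_\rho(x)=\rho x/\|x\|$ otherwise) and set $T=\widetilde S_{\rho,\varepsilon}\circ r_\rho$. This composition is $\alpha$-H\"older with constant $(\varepsilon+L(2\rho)^{1-\alpha})\cdot 2^\alpha$, which drops below $1$ once $\rho$ and $\varepsilon$ are sufficiently small.

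Fixed point freeness is automatic: any fixed point of $T$ must lie in $\rho B_X$ (where $r_\rho$ is the identity), reducing to a fixed point of $\widetilde S_{\rho,\varepsilon}$. For positive minimal displacement one splits $B_X$; on $\rho B_X$ the bound $\|T(x)-x\|\geq\rho c-\varepsilon$ is inherited from Step~2, while on the annulus $\|x\|>\rho$ the triangle inequality comparing $\|T(x)-r_\rho(x)\|\geq\rho c-\varepsilon$ with $\|x-r_\rho(x)\|=\|x\|-\rho$ yields $\|T(x)-x\|\geq\tfrac12(\rho c-\varepsilon)$.

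\textbf{Main obstacle.} The delicate point is that a naive rescaling $x\mapsto\lambda S(x)$ with $\lambda<1$ to enforce H\"older nonexpansiveness produces a strict contraction on a closed ball, which by Banach's theorem has a fixed point and wrecks the construction. Proposition~\ref{R:1sec4} rescues the argument by trading a factor of $L$ for a factor of $\diam(K)^{1-\alpha}$ in the H\"older constant; shrinking $K$ to a small inner ball drives this factor below $1$, while the third step re-spreads the map back to all of $B_X$ through a Lipschitz retraction. The parameter $\rho$ then plays a double role: H\"older calibration on the inner ball and a uniform lower bound on the displacement via the annulus estimate.
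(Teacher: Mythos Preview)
Your argument is correct and follows essentially the same route as the paper: start from a Lipschitz self-map of $B_X$ with positive minimal displacement (Lin--Sternfeld), conjugate-rescale it to a small ball $B_X(\rho)$, and precompose with the $2$-Lipschitz radial retraction onto that ball. The one redundancy is your detour through a ``Lipschitz version of Proposition~\ref{R:1sec4}'': once you have $S_\rho(x)=\rho S(x/\rho)$ on $\rho B_X$, this map is \emph{already} $\alpha$-H\"older with constant $L(2\rho)^{1-\alpha}$ simply because it is $L$-Lipschitz on a set of diameter $2\rho$, so the $\varepsilon$-perturbation $\widetilde S_{\rho,\varepsilon}$ adds nothing and can be dropped---the paper does exactly this, taking $\rho\le(1/2L)^{1/(1-\alpha)}$ so that $T=S_\rho\circ r_\rho$ is $\alpha$-H\"older nonexpansive directly. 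Your annulus estimate for $\mathrm{d}(T,B_X)>0$ is the right way to finish and is what the paper leaves implicit.
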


\smallskip 
 
One direction of (i) is obvious. The converse is straightforward and follows from Lin-Sternfeld's theorem \cite[Theorem 2]{LS}. Indeed, assume $X$ is infinite-dimensional and let $f\colon B_X \to B_X$ be any $L$-Lipschitz map with $\mathrm{d}(f, B_X)>0$. Let $B_X(r)$ be the closed ball in $X$ with center $0$ and radius $r>0$. For $0<r\leq (1/2L)^{\frac{1}{1-\alpha}}\wedge \mathrm{d}(f, B_X)$, define $F\colon B_X(r)\to B_X(r)$ by $F(x) = r f(x/r)$. Then $F$ is $\alpha$-H\"older nonexpansive and  $\mathrm{d}(F, B_X(r))>0$. As for (ii) notice that if $R\colon B_X\to B_X(r)$ is a $2$-Lipschitz retraction onto $B_X(r)$, $T(x)=F(R(x))$ is a $\alpha$-H\"older nonexpansive mapping on $B_X$ with $\digamma(T)=\emptyset$ and $\mathrm{d}(T, B_X)>0$. 

\medskip 

These facts naturally lead us to the main problems considered here:

\begin{problem} Let $X$ be an infinite dimensional Banach space and $\varphi\colon (0,1)\to \mathbb{R}_{+}$ be a real-valued 
function such that $\varphi(\alpha)\to 0$ as $\alpha\to 1$.
\begin{itemize}
\item[($\mathcal{Q}1$)] Under which conditions a (uniformly) $\alpha$-H\"older $L$-Lipschitz mapping $T$ on a closed convex subset $K$ of $X$ has a fixed point?
\item[($\mathcal{Q}2$)] Is there for any $\alpha\in (0,1)$ a set $K\in \mathcal{B}(X)$ and a $\alpha$-H\"older nonexpansive mapping $T\colon K\to K$ such that $\digamma(T)=\emptyset$ and $\mathrm{d}(T, K)\leq \varphi(\alpha)$?
\item[($\mathcal{Q}3$)] Is there for any $\alpha\in (0,1)$ a ({\it uniformly}) $\alpha$-H\"older nonexpansive mapping $T\colon B_X\to B_X$ such that $\digamma(T)=\emptyset$ and $\mathrm{d}(T, B_X)=0$?
\end{itemize}
\end{problem}
 
\medskip 

In the spirit of Schauder and Schaefer's fixed point results (cf. \cite[p. 502]{Evans}), ($\mathcal{Q}1$) seems not to be a hard problem to tackle. A priori though, it is not clear how to solve ($\mathcal{Q}2$) and ($\mathcal{Q}3$) in all their generality. Interestingly, although ($\mathcal{Q}3$) seems to be a bit challenging, we provide satisfactory answers in several situations. We stress that, to the best of our knowledge, the results displayed in this work are not available in the published literature. Nevertheless, they stem from the combination of known results.

\smallskip 

The paper is organized as follows. Section \ref{sec:2} deals with some preliminaries. Our main results are delivered in Sections \ref{sec:3}, \ref{sec:4} and \ref{sec:5}.

\newpage

{\bf Acknowledgements.}  A significant part of this work was written during the author's visit to the Mathematics Department of the Federal University of Amazonas (UFAM), from December 24, 2021 to January 11, 2022. For this reason, he wishes to thank Professors Fl\'avia Morgana and Jeremias Le\~ao for the kind invitation and support. Some results of this work were presented at the I Fortaleza Conference on Analysis and PDEs in the summer 2022. The author also thanks the organizers of the conference, Professors Gleydson Ricarte, P\^edra Andrade and Jo\~ao Vitor da Silva, for the invitation to give a talk on the subject of this paper. 

\medskip 

\section{Preliminaries}\label{sec:2}

We follow \cite{AK,FHHMZ, LT} for notation. For reader's convenience, we recall some basic facts from Banach space and metric space theory as well. Throughout this paper, $X$ will stand for a real Banach space, $\|\cdot\|$ the norm of $X$, $S_X$ and $B_X$ the unit sphere and the closed unit ball of $X$ respectively. Recall a sequence $(x_n)_{n=1}^\infty$ in $X$ is called a basic sequence (or simply {\it basic}) if it is a Schauder basis for its closed linear span $[x_n]$. A basic sequence $(x_n)_{n=1}^\infty$  is called $1$-{\it subspreading} if for every increasing sequence $(n_i)_{i=1}^\infty\subset \mathbb{N}$, $(x_{n_i})_i$ is $1$-dominated by $(x_n)_n$. That is, the linear mapping $\mathfrak{L}(x_i):= x_{n_i}$ has norm $\|\mathfrak{L}\|\leq 1$. Henceforth $\co$, $\mathrm{c}$, $\ell_\infty$ and $\ell_p$ denote the usual space of all real sequences $(t_j)_{j=1}^\infty$ such that $\lim_j t_j=0$, $(t_j)_{j=1}^\infty$ is convergent, $(t_j)_{j=1}^\infty$ is bounded, and $\sum_{j=1}^\infty |t_j|^p<\infty$.  In addition, $L_p$ denotes the usual Lebesgue space $L_p[0,1]$. As in \cite[p. 315]{GP} we say that $X$ is {\it separably Sobczyk} if every subspace of $X$ isomorphic to $\co$ is complemented. A metric space $(K,\rho)$ is called hyperconvex if  any family of closed balls $\{B(x_\alpha; r_\alpha)\colon \alpha\in \Lambda\}$ of $K$ satisfying $\rho(x_\alpha, x_\beta)\leq r_\alpha + r_\beta$ for all $\alpha, \beta\in \Lambda$, has nonempty intersection. It is known that $B_{\ell_\infty}$ as well as any order interval in $\ell_\infty$ are hyperconvex. Further, a Banach space $X$ is hyperconvex if and only if it is isometrically isomorphic to a $C(K)$ space with $K$ stonian. In particular, $\ell_\infty$ and $L_\infty$ are standard examples of hyperconvex Banach spaces. A subset $A$ of $K$ is called {\it admissible} if $A$ can be written as the intersection of a family of closed balls centered at points of $K$. Denote by $\mathcal{A}(K)$ the family of all admissible subsets of $K$, and by $\mathrm{cov}(A)$ the {\it cov-closure} of $A$, that is,
\[
\textrm{cov}(A)= \bigcap\Big\{ B\colon B\in \mathcal{A}(K),\, B\supseteq A\Big\}.
\]
Note that $\mathcal{A}(K)=\big\{ A\subset K\colon A=\textrm{cov}(A)\big\}$. Aronszajn and Panitchpakdi \cite{AP} proved the following important result. 

\begin{theorem}\label{thm:AP} A metric space $(K,\rho)$ is hyperconvex if and only if for every metric space $M$ in which $K$ embeds isometrically, $K$ is a nonexpansive retract of $M$. 
\end{theorem}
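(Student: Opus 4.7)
The proof splits into two implications, both carried out by classical constructions. For the forward direction, assume $K$ is hyperconvex and $K$ embeds isometrically into a metric space $M$. I would invoke Zorn's lemma on the poset $\mathcal{P}$ of pairs $(A, f)$ with $K \subseteq A \subseteq M$ and $f \colon A \to K$ nonexpansive, $f|_K = \mathrm{id}_K$, ordered by extension. A maximal element $(A^*, f^*)$ must satisfy $A^* = M$: otherwise, for any $x \in M \setminus A^*$, the family of balls $\{B(f^*(a), \rho(x, a)) : a \in A^*\} \subseteq K$ is pairwise compatible since
\[
\rho(f^*(a), f^*(b)) \leq \rho(a, b) \leq \rho(a, x) + \rho(x, b),
\]
so by hyperconvexity the intersection contains some $y \in K$; extending $f^*$ by $x \mapsto y$ contradicts maximality.

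For the converse direction, given a pairwise-compatible family $\{B(x_\alpha, r_\alpha)\}_{\alpha \in \Lambda}$ in $K$, I would adjoin a ``virtual intersection point'' $z \notin K$ to form $M = K \cup \{z\}$, extending $\rho$ by $\tilde\rho(z, z) = 0$ and
\[
\tilde\rho(z, x) = \inf_{\alpha \in \Lambda} \bigl( r_\alpha + \rho(x_\alpha, x) \bigr), \qquad x \in K.
\]
This gives $\tilde\rho(z, x_\beta) \leq r_\beta$ for each $\beta$. Two triangle inequalities must be checked: the estimate $\tilde\rho(z, x) \leq \tilde\rho(z, y) + \rho(y, x)$ follows by adding $\rho(y, x)$ inside the infimum, while $\rho(x, y) \leq \tilde\rho(z, x) + \tilde\rho(z, y)$ is precisely where the compatibility assumption enters, via
\[
\rho(x, y) \leq \rho(x, x_\alpha) + \rho(x_\alpha, x_\beta) + \rho(x_\beta, y) \leq \bigl(r_\alpha + \rho(x_\alpha, x)\bigr) + \bigl(r_\beta + \rho(x_\beta, y)\bigr)
\]
followed by taking infima in $\alpha$ and $\beta$. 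Once $(M, \tilde\rho)$ is a (pseudo)metric space, applying the hypothesis yields a nonexpansive retraction $R \colon M \to K$, and $R(z)$ satisfies $\rho(R(z), x_\alpha) = \rho(R(z), R(x_\alpha)) \leq \tilde\rho(z, x_\alpha) \leq r_\alpha$ for every $\alpha$, placing $R(z) \in \bigcap_\alpha B(x_\alpha, r_\alpha)$.

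The main obstacle is the fine-tuning of $\tilde\rho$ in the converse: the definition of $\tilde\rho(z, \cdot)$ must be calibrated so that the pairwise ball-compatibility $\rho(x_\alpha, x_\beta) \leq r_\alpha + r_\beta$ converts cleanly into the cross-term triangle inequality displayed above. A minor loose end is the possibility that $\tilde\rho(z, x) = 0$ for some $x \in K$, in which case that $x$ already lies in every $B(x_\alpha, r_\alpha)$ and the conclusion is immediate; otherwise one works on the genuine extension $M = K \sqcup \{z\}$. The forward direction is then routine Zornification once one observes that nonexpansivity of $f^*$ together with the ambient triangle inequality automatically yield the pairwise ball condition needed to invoke hyperconvexity.
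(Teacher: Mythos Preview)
The paper does not supply a proof of this theorem: it is quoted as the Aronszajn--Panitchpakdi theorem and attributed to \cite{AP}, with no argument given. Your proposal is therefore not being compared against an in-paper proof but rather stands on its own, and it is correct. Both directions follow the classical Aronszajn--Panitchpakdi line: the forward implication is the standard one-point-at-a-time Zorn extension (hyperconvexity furnishing the required common point of the balls $\{B(f^*(a),\rho(x,a))\}$), and the converse is the usual ``adjoin a formal intersection point'' trick, with the metric $\tilde\rho(z,x)=\inf_\alpha(r_\alpha+\rho(x_\alpha,x))$ calibrated exactly so that the compatibility hypothesis $\rho(x_\alpha,x_\beta)\le r_\alpha+r_\beta$ yields the cross triangle inequality. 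Your handling of the degenerate case $\tilde\rho(z,x)=0$ is also correct: from the already-verified inequality $\rho(x,x_\beta)\le\tilde\rho(z,x)+\tilde\rho(z,x_\beta)\le r_\beta$ one sees that such an $x$ lies in every ball, so one may assume $\tilde\rho$ is a genuine metric before invoking the retraction hypothesis.
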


\medskip 

We shall also use the following versions James's non-distortion theorem \cite{DJLT, DLT1}.

\begin{lemma}\label{thm:J} Let $(X,\|\cdot\|)$ be a Banach space. The following are true:
\begin{itemize}
\item[(i)] $X$ contains a copy of $\ell_1$ if and only if, for every null sequence $(\delta_n)_{n=1}^\infty$ in $(0,1)$, there exists a sequence $(x_n)_{n=1}^\infty$ in $X$ such that, for all $(a_n)_{n=1}^\infty \in \ell_1$ and $k\in \mathbb{N}$,
\begin{equation}\label{eqn:A}
(1- \delta_k) \sum_{n=k}^\infty | a_n | \leq \Bigg\| \sum_{n=k}^\infty a_n x_n \Bigg\|\leq \sum_{n=k}^\infty | a_n|.
\end{equation}
\item[(ii)] $X$ contains a copy of $\co$ if and only if, for every null sequence $(\delta_n)_{n=1}^\infty $ in $(0,1)$, there exists a sequence $(x_n)_{n=1}^\infty $ in $X$ such that, for all $(a_n)_{n=1}^\infty \in \co$ and $k\in \mathbb{N}$,
\begin{equation}\label{eqn:B}
(1- \delta_k) \sup_{n\geq k}| a_n | \leq \Bigg\| \sum_{n=k}^\infty a_n x_n \Bigg\|\leq (1+ \delta_k)\sup_{n\geq k}| a_n|.
\end{equation}
\end{itemize}
\end{lemma}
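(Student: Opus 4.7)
The plan is to handle both equivalences in parallel. For each part, the direction ``$\Leftarrow$'' is immediate: the existence of a sequence $(x_n)$ satisfying the stated tail estimates (for any fixed null $(\delta_n)$) implies, by taking $k=1$, that $(x_n)$ is equivalent to the unit-vector basis of $\ell_1$ (resp.\ $c_0$), so its closed linear span is isomorphic to $\ell_1$ (resp.\ $c_0$) and sits inside $X$.

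For the direction ``$\Rightarrow$'' I iteratively apply the classical James distortion theorem while passing to successively finer normalized block bases. I describe part (i). Starting from any normalized $\ell_1$-basic sequence in $X$, James's theorem produces a normalized block basis $(y_n^{(1)})$ which is $(1-\delta_1)$-equivalent to the $\ell_1$ unit-vector basis; set $x_1 := y_1^{(1)}$. Inductively, given $x_1,\dots,x_k$ together with a normalized sequence $(y_n^{(k)})_{n\ge k}$ that is $(1-\delta_k)$-equivalent to the $\ell_1$ basis and satisfies $x_k=y_k^{(k)}$, apply James's theorem to the subsequence $(y_n^{(k)})_{n\ge k+1}$ to extract a normalized block basis $(y_n^{(k+1)})_{n\ge k+1}$ which is $(1-\delta_{k+1})$-equivalent to the $\ell_1$ basis, and put $x_{k+1}:=y_{k+1}^{(k+1)}$.

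The crucial combinatorial observation is that for every fixed $k$, the tail $(x_n)_{n\ge k}$ is a normalized block basis of $(y_j^{(k)})_{j\ge k}$: in that expansion $x_k$ is supported on the single coordinate $k$, and each $x_{k+m}$ (for $m\ge 1$) has support strictly later than $x_{k+m-1}$, as one sees by unwinding the successive block-basis refinements. The upper estimate $\|\sum_{n\ge k}a_n x_n\|\le\sum_{n\ge k}|a_n|$ follows at once from the triangle inequality and $\|x_n\|=1$. For the lower estimate, write $x_n=\sum_{j\in B_n}\beta_j^{(n)}y_j^{(k)}$ with $B_k<B_{k+1}<\cdots$ disjoint finite blocks; applying the $(1-\delta_k)$-lower bound of $(y_j^{(k)})_{j\ge k}$ and using $\sum_{j\in B_n}|\beta_j^{(n)}|\ge\|x_n\|=1$ (triangle inequality again) yields $\|\sum_{n\ge k}a_n x_n\|\ge (1-\delta_k)\sum_{n\ge k}|a_n|$.

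Part (ii) is proved in exactly the same way, using James's theorem for $c_0$: at each step extract a normalized block basis that is two-sidedly $(1\pm\eta_k)$-equivalent to the $c_0$ basis, with $\eta_k$ chosen sufficiently small relative to $\delta_k$ so that the desired $(1\pm\delta_k)$ two-sided distortion on the tail survives the block-basis calculation (here, unlike the $\ell_1$ case, neither side is free, because blocking can slightly inflate the supremum norm of a normalized element). The main obstacle throughout is the combinatorial bookkeeping that guarantees each tail is a genuine block basis of the $k$-th approximating sequence; this is a standard diagonal-type argument (cf.\ \cite{DJLT,DLT1}).
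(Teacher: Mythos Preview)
The paper does not supply its own proof of this lemma: it is stated as a known version of James's non-distortion theorem, with a direct citation to \cite{DJLT,DLT1} and nothing further. Your sketch is correct and is essentially the iterated-James/diagonal argument from those references, so there is no discrepancy to flag.
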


\smallskip 


We close this section by highlighting some easily verified facts.

\begin{proposition}\label{R:3sec2} Let $K$ be a complete subset of a metric space $(X,\rho)$. Then a $\alpha$-H\"older $L$-Lipschitz mapping $T\colon K\to K$ with $\alpha>1$ has a fixed point if:
\begin{itemize}
\item[\it{(1)}] $L\in (0,1)$ and\, $ \rho(Tx, x)\leq 1$ for some $x\in K$.  
\item[\it{(2)}] $L\in (0,1)$ and\, $\limsup_{i\to \infty} \rho(T^ix, x)< 1$ for some $x\in K$.  
\item[\it{(3)}] $X$ is a Banach space, $K$ is bounded and $L< \diam(K)^{1-\alpha}$.
\end{itemize}
In addition, if $X$ is a normed space and $K\in \mathcal{B}(X)$ then $T$  is constant. 
\end{proposition}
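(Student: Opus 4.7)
Parts (1) and (3) are applications of the contraction-iteration machine. For (1), set $x_n = T^n x$ and $d_n = \rho(x_{n+1}, x_n)$; the H\"older bound gives the recursion $d_{n+1} \leq L d_n^\alpha$, so starting from $d_0 \leq 1$ an induction yields $d_n \leq L^{(\alpha^n-1)/(\alpha-1)}$. Because $L<1$ and $\alpha>1$ this is super-geometrically small, so $\sum_n d_n < \infty$, $(x_n)$ is Cauchy in the complete space $K$, and continuity of $T$ produces a fixed point $x^* = \lim_n x_n$. For (3), the estimate
\[
\rho(Tx, Ty) \leq L\|x-y\|^\alpha = L\|x-y\|^{\alpha-1}\|x-y\| \leq L\,\diam(K)^{\alpha-1}\|x-y\|,
\]
combined with the hypothesis $L<\diam(K)^{1-\alpha}$, shows that $T$ is a strict Lipschitz contraction on the complete set $K$, so Banach's contraction principle applies.

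For (2), I would reduce to (1) via iterates. A direct check gives that $T^N$ is $\alpha^N$-H\"older with constant $L_N := L^{(\alpha^N-1)/(\alpha-1)} \in (0,1)$, and the hypothesis $\limsup_i \rho(T^i x, x) < 1$ supplies an $N$ with $\rho(T^N x, x) < 1$. Part (1) applied to $T^N$ yields a fixed point $z$ of $T^N$; since $Tz$ is automatically a fixed point of $T^N$, one wants to force $Tz = z$. The H\"older inequality for $T^N$ applied to any two distinct fixed points $u\neq v$ readily gives the separation $\rho(u,v) \geq L^{-1/(\alpha-1)}$, a lower bound that is $>1$ and independent of $N$. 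The main obstacle is the reverse estimate: writing $Tz = \lim_k T^{kN+1} x$ and iterating the one-step H\"older bound along the $T$-orbit before passing to the limit gives
\[
\rho(Tz, z) \leq L^{-1/(\alpha-1)}\bigl(L^{1/(\alpha-1)} \cdot 2c\bigr)^{\alpha^m}
\]
for arbitrarily large $m$, where $c<1$ witnesses the $\limsup$. Letting $m\to\infty$ this drops to $0$ provided the geometric factor $L^{1/(\alpha-1)}\cdot 2c < 1$, forcing $Tz = z$ and closing the argument.

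The final ``in addition'' clause is a polygonal approximation: fix $x,y\in K$, partition the segment $[x,y]\subset K$ into $n$ equal pieces $x_k = x + (k/n)(y-x)$, and apply the H\"older bound to each consecutive pair to obtain
\[
\|Ty - Tx\| \leq \sum_{k=0}^{n-1}\|Tx_{k+1} - Tx_k\| \leq n L\left(\frac{\|y-x\|}{n}\right)^\alpha = L\|y-x\|^\alpha\, n^{1-\alpha}.
\]
Since $\alpha>1$ the right-hand side tends to $0$ as $n\to\infty$, forcing $Tx=Ty$ for all $x,y\in K$, so $T$ is constant.
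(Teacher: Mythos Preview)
The paper itself omits the proof, labeling the proposition an ``easily verified fact,'' so there is no argument to compare against directly. Your treatments of (1), (3) and the ``in addition'' clause are correct and standard: the super-geometric iteration in (1), the reduction to Banach's contraction principle in (3), and the polygonal subdivision for constancy all go through exactly as you wrote.

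The gap is in (2). Your argument hinges on the inequality $L^{1/(\alpha-1)}\cdot 2c<1$, which you state as a proviso but never discharge. That condition is \emph{not} implied by the hypotheses $L<1$, $\alpha>1$, $c<1$: take for instance $\alpha=2$, $L=0.9$, $c=0.9$, where $L^{1/(\alpha-1)}\cdot 2c=0.9\cdot 1.8=1.62>1$. In this regime your displayed upper bound for $\rho(Tz,z)$ blows up rather than tending to zero, and the separation lower bound $\rho(Tz,z)\geq L^{-1/(\alpha-1)}\approx 1.11$ is perfectly compatible with the trivial upper bound $\rho(Tz,z)\leq 2c=1.8$ coming from the orbit lying in $B(x,c)$. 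So nothing in your argument forces $Tz=z$ in general. The underlying difficulty is that the one-step recursion $d_{m+1}\leq L d_m^\alpha$ has $L^{-1/(\alpha-1)}$ as a repelling fixed point from above: once $d_m$ sits in the interval $[L^{-1/(\alpha-1)},2c)$ the inequality alone does not push it below the threshold, and the orbit bound $d_m<2c$ gives no further leverage unless $2c<L^{-1/(\alpha-1)}$. To close (2) you need either an additional estimate that drives some $d_m$ below $L^{-1/(\alpha-1)}$ (equivalently, that produces a point $y$ with $\rho(Ty,y)\leq 1$ so that (1) applies directly to $T$), or a different route altogether.
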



\smallskip 


\section{H\"older FPP in reflexive and hyperconvex spaces}\label{sec:3}

We begin with the following positive result. 


\begin{theorem}\label{R:1sec3} Let $K$ be a nonempty closed convex subset of a reflexive Banach space $X$. Assume that $T\colon K\to K$ is weakly sequentially continuous and $\alpha$-H\"older Lipschitz with $\alpha\in (0,1)$. Then $T$ has a fixed point if and only if there exists $u\in K$ such that $\{ T^n u\}_{n=1}^\infty$ is bounded.
\end{theorem}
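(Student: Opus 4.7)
The ``only if'' direction is immediate, since any fixed point $x_0$ yields the constant---hence bounded---orbit $\{T^n x_0\} = \{x_0\}$. For the converse, my plan is to build a bounded closed convex $T$-invariant subset $C \subseteq K$ and then apply a fixed point theorem for weakly sequentially continuous self-maps of weakly compact convex sets.

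Let $u \in K$ have bounded orbit, write $L$ for the H\"older constant of $T$, and set $a := \|Tu - u\|$. The H\"older condition gives, for any $x \in K$ with $\|x - u\| \leq R$,
\[
\|Tx - u\| \leq \|Tx - Tu\| + \|Tu - u\| \leq L\|x - u\|^{\alpha} + a \leq L R^{\alpha} + a.
\]
Because $\alpha < 1$, the function $R \mapsto R - L R^{\alpha} - a$ tends to $+\infty$, so I can choose $R_0$ large enough that $L R_0^{\alpha} + a \leq R_0$. The set
\[
C := \{x \in K : \|x - u\| \leq R_0\}
\]
is then nonempty, bounded, closed, convex, and $T$-invariant. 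By reflexivity of $X$, $C$ is weakly compact, and $T|_C$ remains weakly sequentially continuous; the fixed point theorem of Arino--Gautier--Penot (every weakly sequentially continuous self-map of a nonempty weakly compact convex subset of a Banach space has a fixed point) then produces $x^{*} \in C$ with $Tx^{*} = x^{*}$.

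The principal subtlety lies in this last step. In a general reflexive Banach space the weak topology on $C$ need not be metrizable, so weak sequential continuity is strictly weaker than weak continuity and the Schauder--Tychonoff theorem in the weak topology does not apply directly; it is the sequential variant of Arino, Gautier and Penot that makes the step valid. A secondary point is that the construction of $C$ hinges squarely on $\alpha < 1$: the sublinearity of $R^{\alpha}$ is what lets $L R^{\alpha} + a$ be absorbed by $R$, an absorption that fails at the boundary case $\alpha = 1$, where one typically needs additional geometric hypotheses such as normal structure.
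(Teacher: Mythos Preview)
Your proof is correct and in fact cleaner than the paper's. The paper adapts the construction from \cite{KaKi} (designed for the nonexpansive case $\alpha=1$): it chooses $R\ge 1$ with $\{T^n u\}\subset B(u;R)$ and $LR^\alpha\le R$, forms the set $W=K\cap\bigl(\bigcup_k\bigcap_{n\ge k}B(T^n u;R)\bigr)$, checks $T$-invariance, and then invokes Arino--Gautier--Penot on $\overline{W}$. You instead take the single ball $C=K\cap B(u;R_0)$ with $LR_0^\alpha+\|Tu-u\|\le R_0$, which is a more direct route to a bounded closed convex $T$-invariant set. Notably, your argument never actually uses the boundedness of the orbit $\{T^n u\}$---only the finiteness of $a=\|Tu-u\|$, which is automatic---so it reveals that the bounded-orbit hypothesis is superfluous when $\alpha<1$ (indeed, once $C$ is $T$-invariant, $\{T^n u\}\subset C$ is bounded for free). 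The paper's more elaborate set $W$ is an artifact of mimicking the $\alpha=1$ proof, where the hypothesis is genuinely needed because $LR+a\le R$ cannot be arranged for $L\ge1$ and $a>0$; for $\alpha<1$ the sublinearity of $R^\alpha$ makes the simpler construction available, exactly as you observe in your closing remark.
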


\begin{proof} The orbit $\{ T^n u\}_{n=1}^\infty$ is obviously bounded if $u\in \digamma(T)$, so we only need to prove the converse direction. For this we will argue as in \cite[Lemma 2.3]{KaKi}. Let $L>0$ be as in Definition \ref{def:1}-(3). Next choose $R\geq 1$ so that $\{ T^n(u)\}_{n=1}^\infty\subset B(u;R)$ and $L R^\alpha\leq R$, where for $p\in X$ $B(p;R)$ denotes the closed ball in $X$ of center $p$ and radius $R$. Set
\[
W= K\cap \Bigg( \bigcup_{k=1}^\infty \bigcap_{n=k}^\infty B(T^n (u);R)\Bigg).
\]
Clearly $u\in W$, so $W\neq \emptyset$. If $x\in W$, there is $k\in \mathbb{N}$ so that $\| x - T^n(u)\| \leq R$ for all $n\geq k$. Thus $\| T(x) - T^n(u)\| \leq L \| x - T^{n-1}(u)\|^\alpha\leq L R^\alpha\leq R$ for all $n\geq k+1$, and hence $T( W)\subseteq W$. As $W$ is of course convex, $C:=\overline{W}$ is bounded, closed convex and $T$-invariant. Consequently, $T$ has a fixed point by a result of Arino, Gautier and Penot \cite[Theorem 1]{AGP}. 
\end{proof}


\smallskip 


It must be stressed that the reflexivity in Theorem \ref{R:3sec3} is essential, which is evident from what follows.

\begin{example}\label{R:2sec3} This is adapted from an example by S. Prus (see \cite[p.373]{KhLS}). Pick any $\alpha\in (0,1)$. Let $\mathfrak{L}$ denote a Banach limit on $\ell_\infty$ and define for $x=(t_i)_{i=1}^\infty\in \ell_\infty$,
\[
T(x)= \big( \big|1 - |\mathfrak{L}(x)|^\alpha\big|, |t_1|^\alpha, |t_2|^\alpha, \dots ).
\]
Then the orbit of $0$ is bounded since $T^n(0,0,\dots)=(1, \dots, 1, 0,0, \dots)$ with $1$ in the first $n$ coordinates. Further, it can be easily seen that $\digamma(T)=\emptyset$ and $T$ is $\alpha$-H\"older nonexpansive. 
\end{example}

\begin{example}\label{R:3sec3} Alspach \cite{A} displayed an example of a weakly compact convex set $K\subset L_1$ with diameter $2$ and a fixed-point free isometry $T\colon K\to K$. Notice in particular that $T$ is uniformly $\alpha$-H\"older $2^{1-\alpha}$-Lipschitz for all $\alpha\in (0,1)$. In fact, $K$ and $T$ can be easily modified so that $T$ becomes uniformly $\alpha$-H\"older nonexpansive (cf. \cite[Example 4.1]{Sim})
\end{example}

\smallskip 

Let $X$ be a Banach space, $K\subset X$ and a mapping $T\colon K\to X$. For $\delta>0$, let $\digamma_{\hskip -.05cm\delta}(T) = \big\{ x\in K \colon \| x - T(x) \| \leq \delta\big\}$ denote the $\delta$-approximate fixed point set of $T$. Of course, $\digamma_{\hskip -.05cm 0}(T)=\digamma(T)$. The next result provides a sufficient condition for $\alpha$-H\"older nonexpansive maps on unbounded domains to have bounded orbits. 

\medskip 

\begin{proposition}\label{R:4sec3} Let $K$ be a nonempty subset of a Banach space $X$. Assume that $T\colon K\to K$ is $\alpha$-H\"older nonexpansive with $\alpha\in (0,1)$ and that $\digamma_{\hskip -.05cm\delta}(T) $ is nonempty and bounded for some $\delta\geq 1$. Then there exists $x\in K$ so that $\{ T^n x\}_{n=1}^\infty$ is bounded. 
\end{proposition}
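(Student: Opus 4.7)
My plan is to show that the set $\digamma_{\delta}(T)$ is itself $T$-invariant under the given hypotheses; since it is bounded by assumption, the orbit of any of its points will be bounded.

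\textbf{Step 1 (key invariance).} Take any $x \in \digamma_{\delta}(T)$, so that $\|x - Tx\| \leq \delta$. Applying the $\alpha$-H\"older nonexpansive condition to the pair $(x, Tx)$ yields
\[
\| Tx - T(Tx) \| \leq \| x - Tx\|^{\alpha} \leq \delta^{\alpha}.
\]
Because $\delta \geq 1$ and $\alpha \in (0,1)$, we have $\delta^{\alpha} \leq \delta$, so $Tx \in \digamma_{\delta}(T)$. Hence $T(\digamma_{\delta}(T)) \subseteq \digamma_{\delta}(T)$.

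\textbf{Step 2 (conclusion).} Fix any $x \in \digamma_{\delta}(T)$ (which exists by hypothesis). A trivial induction based on Step 1 gives $T^{n} x \in \digamma_{\delta}(T)$ for every $n \in \mathbb{N}$. Since $\digamma_{\delta}(T)$ is assumed bounded, the sequence $\{T^{n} x\}_{n=1}^{\infty}$ is bounded, as required.

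There is no serious obstacle here; the only subtlety worth flagging is the role of the condition $\delta \geq 1$, which is exactly what makes the inequality $\delta^{\alpha} \leq \delta$ valid and thus keeps the iterates inside the approximate fixed point set. For $\delta < 1$ the argument breaks down, because then $\delta^{\alpha} > \delta$ and $T$ could drive points out of $\digamma_{\delta}(T)$; this is why the hypothesis $\delta \geq 1$ is imposed rather than assuming $\digamma_{\delta}(T) \neq \emptyset$ for arbitrarily small $\delta$.
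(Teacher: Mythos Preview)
Your proof is correct and follows essentially the same approach as the paper: show $T(\digamma_{\delta}(T)) \subseteq \digamma_{\delta}(T)$ via the inequality $\|Tx - T^2x\| \leq \|x - Tx\|^{\alpha} \leq \delta^{\alpha} \leq \delta$, then conclude that any orbit starting in this bounded set stays there. Your added remark explaining why the hypothesis $\delta \geq 1$ is needed is a nice touch not present in the paper's terse version.
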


\begin{proof} Since $T$ is $\alpha$-H\"older nonexpansive and $\delta\geq 1$, for $x\in \digamma_{\hskip -.05cm\delta}(T) $ we have
\[
\| T(x) - T^2(x) \| \leq \| x - T(x)\|^\alpha\leq  \delta^\alpha\leq \delta,
\]
so $T\big(\digamma_{\hskip -.05cm\delta}(T) \big) \subseteq\digamma_{\hskip -.05cm\delta}(T) $ and hence $\{ T^n x\}_{n=1}^\infty$ is a bounded for $x\in \digamma_{\hskip -.05cm\delta}(T) $. 
\end{proof}

\medskip 

\begin{remark}\label{R:5sec3} The previous argument was already used in \cite[Lemma 2.2]{KaKi} for the case $\alpha=1$. We also remark that a small modification of the proof in \cite[Lemma 2.1]{KaKi} shows that $\digamma_{\hskip -.05cm\delta}(T) $ is bounded for all $\delta\geq 1$ whenever there exists a point $x_0\in K$ such that
\[
\limsup_{x\in K, \| x\| \to \infty}\frac{\| T(x) - T(x_0)\|}{\| x - x_0\|^\alpha}<1.
\]
\end{remark}

\smallskip 
\begin{remark} Recall a nonlinear mapping $T\colon X\to X$ is called compact provided $T$ maps bounded sequences into sequences that have a convergent subsequence. 
\end{remark}

\smallskip 

\begin{proposition}\label{R:Asec3} Let $K$ be a closed convex subset of a Banach space $X$. Assume that $T\colon K\to K$ is $\alpha$-H\"older $L$-Lipschitz and compact. Then $T$ has a fixed point. 
\end{proposition}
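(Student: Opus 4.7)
The plan is to produce a nonempty bounded, closed, convex, $T$-invariant subset of $K$ whose image under $T$ is relatively compact, and then to invoke Schauder's fixed point theorem. The substantive point is that because $\alpha<1$, the H\"older-Lipschitz condition forces strictly sublinear growth, so even when $K$ is unbounded one can still trap $T$ inside a sufficiently large ball.

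First I would fix any $x_0\in K$ and estimate, for $x\in K$ with $\|x-x_0\|\le R$,
\[
\|Tx-x_0\|\le \|Tx-Tx_0\|+\|Tx_0-x_0\|\le LR^\alpha+\|Tx_0-x_0\|.
\]
Since $\alpha<1$, the function $R\mapsto R-LR^\alpha$ tends to $+\infty$, so $R$ may be chosen large enough that $LR^\alpha+\|Tx_0-x_0\|\le R$. Writing $B(x_0;R)$ for the closed ball of centre $x_0$ and radius $R$ (as in the proof of Theorem \ref{R:1sec3}) and setting $D:=K\cap B(x_0;R)$, one obtains a nonempty, closed, bounded, convex subset of $K$ satisfying $T(D)\subseteq D$.

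The remainder is routine. Since $T$ is compact and $D$ is bounded, $T(D)$ is relatively compact, and by Mazur's theorem $C:=\overline{\conv}\,T(D)$ is compact and convex. Closedness and convexity of $D$ yield $C\subseteq D\subseteq K$, while $T(C)\subseteq T(D)\subseteq C$, so Schauder's fixed point theorem applied to $T|_C$ produces a fixed point of $T$ in $K$.

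I do not anticipate a serious obstacle: the whole argument hinges on the inequality $LR^\alpha+\|Tx_0-x_0\|\le R$ admitting a solution, which is exactly the content of $\alpha<1$. This is also precisely what distinguishes the result from a purely Lipschitz statement on an unbounded $K$, where no such invariant ball need exist (e.g.\ a translation on $K=X$).
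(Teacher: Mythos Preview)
Your argument is correct. The key step --- choosing $R$ with $LR^\alpha+\|Tx_0-x_0\|\le R$, which is possible exactly because $\alpha<1$ --- gives a $T$-invariant bounded closed convex set $D$, and then Mazur plus Schauder finish the job cleanly.

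The paper takes a different but closely related route: rather than constructing an invariant ball, it observes that the set $\{x\in K: x=\lambda T(x)\text{ for some }\lambda\in(0,1)\}$ is bounded (again because $\alpha<1$ forces $\|x\|\le L\|x\|^\alpha+\|T(0)\|$, after translating so that $0\in K$) and then invokes Schaefer's fixed point theorem (the Leray--Schauder alternative). Both arguments rest on the same sublinear-growth estimate; the difference is packaging. Your approach is more elementary and self-contained --- it uses only Schauder's theorem and Mazur's theorem, and makes the invariant set explicit --- whereas the paper's version is terser but leans on a higher-level black box. Your route also has the mild advantage of working verbatim on any closed convex $K$ without needing to verify that $\lambda T(x)\in K$, which Schaefer's formulation implicitly requires.
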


\begin{proof} We may assume that $K$ is unbounded. Now observe that that the set
\[
\big\{ x\in K \colon \, x=\lambda T(u) \text{ for some } 0< \lambda <1\big\}
\]
is bounded, so by the Schaefer fixed point method \cite[p. 504]{Evans} the result follows. 
\end{proof}


\smallskip 

Following the terminology of A. Naor \cite{Na}, for a closed convex set $C\subset X$ we denote by $\mathcal{A}_1(C,X)$ the set of all $\alpha>0$ such that for all bounded convex subset $K\subset C$ and for all $\alpha$-H\"older nonexpansive mapping $F\colon K\to K$ there is $T\colon C\to K$ which is $\alpha$-H\"older nonexpansive and the restriction of $T$ to $K$ is $F$. Recall that for a set $K\in \mathcal{B}(X)$ every affine mapping $T\colon K\to K$ satisfies $\mathrm{d}(T, K)=0$. 

\medskip 


\begin{theorem}\label{R:6sec3}  Let $X$ be a Banach space. The following statements hold:
\begin{itemize}
\item[\it{(1)}]  If $K\in \mathcal{B}(X)$ has the $\alpha$-HFPP for some $\alpha\in (0,1)$ then $K$ has the FPP. 
\item[\it{(2)}]  If $X$ is not reflexive then for every $L> 1$, $\lambda\in (1/L, 1]$ and $\alpha \in (0,1)$ there exist $K\in \mathcal{B}(B_X)$ and a fixed-point free affine mapping $T\colon K\to K$ such that
\[
L^{-1}\| x - y\| \leq \| T(x) - T(y)\| \leq \lambda \| x - y\|^\alpha\quad\text{for all } x, y\in K. 
\]
\item[\it{(3)}] If $X$ has a normalized basic sequence which $1$-dominates some subsequence then for any $\alpha\in (0,1)$ there exist a set $K\in \mathcal{B}(B_X)$ and a $\alpha$-H\"older nonexpansive mapping $T\colon K \to K$ such that $\digamma(T)=\emptyset$ and $\mathrm{d}(T, K)\leq (1/2)^{\frac{2-\alpha}{1-\alpha}}$. 
\item[\it{(4)}]  If $X$ contains a normalized $1$-subspreading basis $\{ e_i\}_{i=1}^\infty$ and has the FPP  then $1 \not\in \mathcal{A}_1(B_X,X)$.
\end{itemize}
\end{theorem}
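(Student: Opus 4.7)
The plan is to exhibit a single bounded convex subset of $B_X$ together with a nonexpansive self-map which cannot be extended nonexpansively to $B_X$, using the $1$-subspreading basis to build the map and the FPP of $X$ to rule out any extension. I take $K:= \conv\{e_i : i\in\mathbb{N}\}$ (the algebraic convex hull, not its closure), which is a bounded convex subset of $B_X$ because each $e_i\in S_X$. Define the shift $F\colon K\to K$ by $F\bigl(\sum_{i=1}^n a_i e_i\bigr):=\sum_{i=1}^n a_i e_{i+1}$; it is well defined and affine by the uniqueness of finite basis expansions, and it sends $K$ into $K$ since the shifted coefficients remain nonnegative and still sum to $1$.

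Next I verify the two properties of $F$ on which the argument hinges. To see that $F$ is nonexpansive, I apply the $1$-subspreading hypothesis to the shifted sequence $(e_{i+1})_{i\geq 1}$: the linear extension $\mathfrak{L}(e_i):= e_{i+1}$ satisfies $\|\mathfrak{L}\|\leq 1$, so $\|F(x)-F(y)\|=\|\mathfrak{L}(x-y)\|\leq\|x-y\|$ for every $x,y\in K$. To see that $F$ has no fixed point in $K$, observe that if $x=\sum_i a_i e_i\in K$ satisfied $F(x)=x$, uniqueness of basis expansions would force $a_1=0$ and $a_i=a_{i-1}$ for all $i\geq 2$, whence $x=0$; but $0\notin\conv\{e_i\}$ because $\sum a_i =1$. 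Now suppose, for contradiction, that a nonexpansive extension $T\colon B_X\to K$ with $T|_K=F$ existed. Since $K\subset B_X$, $T$ is in particular a nonexpansive self-map of $B_X\in\mathcal{B}(X)$, so the FPP hypothesis supplies some $x_0\in B_X$ with $T(x_0)=x_0$. Because $x_0=T(x_0)\in T(B_X)\subset K$, we get $F(x_0)=T(x_0)=x_0$, contradicting the absence of fixed points of $F$ in $K$. Hence $1\notin\mathcal{A}_1(B_X,X)$.

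The crux of the construction---and the place where care is needed---is the choice of $K$ as the \emph{non-closed} convex hull. Replacing $K$ with $\overline{\conv}\{e_i\}\in\mathcal{B}(X)$ would, under the FPP hypothesis, immediately produce a fixed point of $F$ on the closure and destroy the obstruction; the argument works precisely because the definition of $\mathcal{A}_1(C,X)$ requires only boundedness and convexity of $K$, and \emph{not} closedness. This subtle but essential exploitation of non-closed convex sets is, I expect, the only delicate point of the proof.
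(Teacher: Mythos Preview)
Your argument for part (4) is correct and follows essentially the same route as the paper: build a fixed-point-free nonexpansive shift on a bounded convex simplex-type set inside $B_X$, then use the FPP of $B_X$ to derive a contradiction from any nonexpansive extension. The only difference is the choice of $K$: you take the algebraic convex hull $\conv\{e_i\}$, while the paper takes the larger set $K=\{x\in B_X:\text{each }e_i^*(x)\ge 0,\ \sum_i e_i^*(x)=1\}$; both are bounded, convex, non-closed, and carry the same shift map, so the arguments are interchangeable. Your explicit remark that closedness of $K$ must be avoided (since FPP on $\overline{K}$ would hand $F$ a fixed point and kill the obstruction) is a nice clarification that the paper leaves implicit.
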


\begin{proof} (1) Assume for a contradiction that $K$ fails the FPP. Let $T\colon K\to K$ be a nonexpansive mapping without fixed points. With no loss of generality assume that $0\in K$. Set $d=\diam(K)$. Clearly if $d\leq 1$ then $T$ is $\alpha$-H\"older nonexpansive. Assume $d>1$. Since $K$ is convex, $d^{-1}T(x)\in K$ for all $x\in K$. Therefore the mapping $P\colon K\to K$ given by $P(x) = d^{-1}Tx$ is $\alpha$-H\"older nonexpansive and fixed point free. So in both cases we have a contradiction.  

\smallskip 

 (2) The proof is just a modification of the result in \cite{BF}. Indeed, it suffices to take a wide-$(s)$ sequence $(x_n)_{n=1}^\infty$ in $B_X$, consider the closed convex subset
 \[
K=\Bigg\{ x=\sum_{n=1}^\infty t_n x_n\in [x_i] \colon t_n\geq 0\,\,\text{ and }\,\, \sum_{n=1}^\infty t_n=\frac{1}{2}\Big(\frac{\lambda}{L}\Big)^{\frac{1}{1-\alpha}}\Bigg\}
\]
and, for any null sequence $(\alpha_n)_n$ in $(0,1)$, define an affine mapping $T\colon K\to K$ by
\[
T\Bigg( \sum_{n=1}^\infty t_n x_n\Bigg) = (1 - \alpha_1)t_1 x_1  + \sum_{n=2}^\infty \big( (1 - \alpha_n) t_n + \alpha_{n-1} t_{n-1}\big) x_n.
\]

\smallskip 

(3) Let $(x_n)_{n=1}^\infty\subset X$ be a basic sequence such that for some increasing sequence $(n_i)_{i=1}^\infty$ in $\mathbb{N}$, $(x_n)_{n=1}^\infty$ $1$-dominates $(x_{n_i})_{i=1}^\infty$. Pick $\lambda \in (0,1)$ so that $(2\lambda)^{1-\alpha} 2^{2-\alpha}\leq 1$ and take
\[
K=\Bigg\{ x=\sum_{n=1}^\infty t_n x_n\in [x_i] \colon \| x \| \leq \lambda\Bigg\}.
\]
Clearly $K\in \mathcal{B}_c(X)$. Let $T\colon K\to X$ be defined by
\[
T(x) = ( \lambda - \| x \| ) x_1 + \sum_{i=1}^\infty t_i x_{n_i}.
\]
Then $T(K)\subset K$, $\digamma(T)=\emptyset$ and $\mathrm{d}(T, K)\leq (1/2)^{\frac{2-\alpha}{1-\alpha}}$. Moreover, for any $x, y\in K$, 
\[
\| T(x) - T(y)\| \leq 2\| x  - y\|  \leq 2(2\lambda)^{1-\alpha}\| x - y\|^\alpha.
\]

\smallskip 

(4) Suppose that $1 \in \mathcal{A}_1(B_X, X)$. Take $K\subset B_X$ to be the set
\[
K=\Bigg\{ x\in B_X \colon \text{each } e^*_i(x)\geq 0\text{ and } \sum_{i=1}^\infty e^*_i(x)=1\Bigg\},
\]
where $\{ e^*_i\}_{i=1}^\infty$ denotes the biorthogonal functionals associated to the basis. Next define $F\colon K\to K$ to be the right shift map with respect to $\{e_i\}_{i=1}^\infty$.  It follows that $\digamma(F)=\emptyset$. Also the $1$-subspreading assumption on the basis implies $F$ is nonepansive. Therefore since $1 \in \mathcal{A}_1(B_X, X)$ there is a nonexpansive extension  $T\colon B_X\to K$ of $F$ to $B_X$. It follows that $\digamma(T)=\emptyset$, contrary to our assumption that $X$ has the FPP. 
\end{proof}

\smallskip 


\begin{corollary}\label{R:7sec3} Let $X$ be a Banach space. The following are equivalent:
\begin{itemize}
\item[\it{(1)}] $X$ is reflexive.
\item[\it{(2)}] $X$ has the FPP for $\alpha$-H\"older nonexpansive affine maps.
\end{itemize}
\end{corollary}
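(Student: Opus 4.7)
The plan is to derive $(1)\Rightarrow(2)$ from Theorem \ref{R:1sec3} and prove $(2)\Rightarrow(1)$ contrapositively using Theorem \ref{R:6sec3}(2).

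For $(2)\Rightarrow(1)$, suppose $X$ is not reflexive and fix any $\alpha\in(0,1)$. Applying Theorem \ref{R:6sec3}(2) with $L=2$ and $\lambda=1\in(1/L,1]$ produces a set $K\in\mathcal{B}(B_X)$ and a fixed-point free affine map $T:K\to K$ satisfying $\|T(x)-T(y)\|\leq\|x-y\|^\alpha$ for all $x,y\in K$. Since this $T$ is affine, $\alpha$-H\"older nonexpansive and has $\digamma(T)=\emptyset$, the space $X$ fails the FPP for affine $\alpha$-H\"older nonexpansive maps, as required.

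For $(1)\Rightarrow(2)$, assume $X$ is reflexive and let $T:K\to K$ be an affine $\alpha$-H\"older nonexpansive map on some $K\in\mathcal{B}(X)$. Every orbit $\{T^n u\}_n$ is bounded, so by Theorem \ref{R:1sec3} it suffices to show that $T$ is weakly sequentially continuous on $K$. Let $x_n\rightharpoonup x$ in $K$; by reflexivity the bounded sequence $(T(x_n))$ is relatively weakly sequentially compact, so it is enough to check that any weak cluster point $z$ of $(T(x_n))$ equals $T(x)$. Passing to a subsequence $T(x_{n_j})\rightharpoonup z$, apply Mazur's theorem to $(x_{n_j})$ (which still converges weakly to $x$) to obtain, for each $m$, a finite convex combination $v_m=\sum_j\mu_{j,m}\,x_{n_j}$ supported on $\{j\geq m\}$ with $\|v_m-x\|\to 0$. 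Affinity and H\"older continuity of $T$ yield $T(v_m)=\sum_j\mu_{j,m}\,T(x_{n_j})\to T(x)$ in norm; meanwhile a tail-supported convex combination of the weakly convergent sequence $T(x_{n_j})\rightharpoonup z$ converges weakly to $z$, so $T(v_m)\rightharpoonup z$. Comparing weak limits gives $z=T(x)$, as desired.

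The only delicate step is the weak sequential continuity argument. Affinity is the essential ingredient: in its absence, an $\alpha$-H\"older nonexpansive self-map of a weakly compact convex set need not be weakly sequentially continuous, as Alspach-type constructions such as Example \ref{R:3sec3} already suggest.
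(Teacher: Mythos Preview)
Your argument is correct. The paper states Corollary \ref{R:7sec3} without proof, treating it as an immediate consequence of the surrounding results; your derivation makes the implicit steps explicit and is fully in line with what the paper sets up. The direction $(2)\Rightarrow(1)$ via Theorem \ref{R:6sec3}(2) with $L=2$, $\lambda=1$ is exactly the intended use of that theorem. For $(1)\Rightarrow(2)$ you route through Theorem \ref{R:1sec3}, and the only extra ingredient needed is weak sequential continuity of an affine norm-continuous self-map of a bounded convex set; your Mazur argument (tail-supported convex combinations $v_m\to x$ in norm, hence $T(v_m)\to T(x)$ by continuity, while the \emph{same} combinations of $T(x_{n_j})$ converge weakly to the cluster point $z$) is a clean and standard way to obtain this, and all steps check out. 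One could alternatively cite the known fact that norm-continuous affine maps on weakly compact convex sets have fixed points (this is essentially the viewpoint of \cite{BF}), which would shorten $(1)\Rightarrow(2)$ to a single reference; your self-contained route through Theorem \ref{R:1sec3} has the advantage of staying entirely within the tools developed in the paper.
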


\medskip 

Our last result in this section concerns hyperconvex metric spaces. 

\smallskip 


\begin{proposition}\label{R:8sec3} Let $(K, \rho)$ be a hyperconvex metric space with $\diam(K)\leq 2$. Assume that $T\colon K\to K$ is fixed-point free and $\alpha$-H\"older nonexpansive. Then $\alpha<1$. 
\end{proposition}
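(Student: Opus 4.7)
The plan is to prove the contrapositive: if $\alpha\ge 1$, then $T$ has a fixed point. The case $\alpha=1$ is the Sine--Soardi theorem (bounded hyperconvex metric spaces have the FPP for nonexpansive maps), so the actual work lies in the range $\alpha>1$. I would adapt Sine's Chebyshev-center argument, pivoting on the observation that $r^\alpha\le r$ whenever $0\le r\le 1$; this is precisely what makes the hypothesis $\diam(K)\le 2$ essential.

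The first step is a Zorn argument on the family of nonempty admissible $T$-invariant subsets of $K$, ordered by reverse inclusion. A descending chain has nonempty intersection, since in a hyperconvex metric space every pairwise-intersecting family of closed balls has common points, and admissible sets are stable under arbitrary intersections. This produces a minimal element $M$. Since $\mathrm{cov}(T(M))$ is admissible, contained in $M$, and $T$-invariant (because $T(\mathrm{cov}(T(M)))\subseteq T(M)\subseteq\mathrm{cov}(T(M))$), minimality forces $M=\mathrm{cov}(T(M))$.

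Now introduce the Chebyshev radius $r:=\inf_{x\in M}\sup_{y\in M}\rho(x,y)$ and the Chebyshev center $C:=\{c\in M:\sup_{y\in M}\rho(c,y)\le r\}$. A standard fact for hyperconvex metric spaces is that for admissible $M$, $C$ is a nonempty admissible subset of $M$ and $r=\diam(M)/2\le\diam(K)/2\le 1$. For $c\in C$ and $z\in M$, the H\"older bound gives $\rho(Tc,Tz)\le\rho(c,z)^\alpha\le r^\alpha$, so $T(M)\subseteq B(Tc;r^\alpha)$, and therefore $M=\mathrm{cov}(T(M))\subseteq B(Tc;r^\alpha)$. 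Since $r\le 1$ and $\alpha\ge 1$, we have $r^\alpha\le r$, which puts $Tc\in C$ and so $T(C)\subseteq C$. Minimality of $M$ yields $C=M$, i.e., $\sup_{y\in M}\rho(x,y)\le r$ for every $x\in M$; taking the supremum in $x$ gives $\diam(M)\le r=\diam(M)/2$, forcing $\diam(M)=0$. Hence $M$ is a singleton whose unique point is fixed by $T$, contradicting the hypothesis. The only real obstacle is securing $r^\alpha\le r$: without the bound $\diam(K)\le 2$ one could have $r>1$ and the Chebyshev-center step would collapse, so the diameter hypothesis is exactly what the argument needs.
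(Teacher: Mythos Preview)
Your proposal is correct and follows essentially the same route as the paper: both run the Sine--Soardi argument on a Zorn-minimal admissible $T$-invariant set, use $M=\mathrm{cov}(T(M))$, and exploit that the Chebyshev radius $r=\diam(M)/2\le 1$ forces $r^\alpha\le r$ so that the Chebyshev center is $T$-invariant, collapsing $M$ to a point. The paper phrases the key step via $r_{Tz}(D)=r_{Tz}(T(D))\le r_z(D)^\alpha$ rather than your $\mathrm{cov}(T(M))\subseteq B(Tc;r^\alpha)$, but these are the same computation.
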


\begin{proof} Assume the contrary that $\alpha\geq 1$. Since hyperconvex spaces have the FPP (see \cite{So}) and $\digamma(T)=\emptyset$, $\alpha>1$. Now we follow \cite{So} to reach to a contradiction. For a nonempty set $C\subset K$ and $x\in K$, set $r_x(C)=\sup\{ \rho(x,y) \colon y\in C\}$. Set $\mathcal{M} = \big\{ D\in \mathcal{A}(K) \colon D\neq \emptyset,\,\, T(D)\subseteq D\big\}$, where $\mathcal{A}(K)$ is the family of all admissible subsets of $K$. Since $K\in \mathcal{M}$, $\mathcal{M}\neq \emptyset$. By Zorn's lemma, $\mathcal{M}$ has a minimal element $D$. Then $D$ is a single set. Indeed, assume that $\diam(D)>0$. Since $T(D)\subset D$, one readily has $\mathrm{cov}(T(D))\subseteq D$ (cf. \cite[p. 85]{KhKi}). Hence 
\[
T(\mathrm{cov}(T(D)))\subseteq T(D)\subseteq \mathrm{cov}(T(D)). 
\]
As $\mathrm{cov}(T(D))\in \mathcal{M}$ and $D$ is minimal, $D=\mathrm{cov}(T(D))$. In consequence we have $r_x(D)= r_x(T(D))$ for all $x\in K$. Let $d=\diam(D)$ and set
\[
M=D\cap \bigcap_{x\in D}B(x ; d/2).
\]
Thus $M$ is nonempty. Now observe that $T(M)\subseteq M$. Indeed, fix $z\in M$. Then since $T$ is $\alpha$-H\"older nonexpansive, we have $r_{T(z)}(T(D))\leq r^\alpha_z(D)$. Notice also that $r_z(D)=d/2$, so $r_{T(z)}(D)= r_{T(z)}(T(D))\leq r_z^\alpha(D) \leq d/2$ since $d\leq 2$ and $\alpha>1$. Using that $T(z)\in D$, we get $T(z)\in M$. It turns out that $D=M$ by the minimality of $D$. But $\diam(M)=d/2$, a contradiction.
\end{proof}


\section{H\"older contractive maps on unit balls}\label{sec:4}

In this section we collect some facts related to H\"older maps on unit balls. In what follows we shall use $B_X(r)$ (resp. $S_X(r)$) to denote the closed ball (sphere) centered at the origin of $X$ with radius $r>0$. 

\medskip 


\begin{proposition}\label{R:1sec4} Let $X$ be an infinite dimensional Banach space. For $\lambda, \alpha\in (0,1)$ the following hold:
\begin{itemize}
\item[\it{(1)}] There exists a $\alpha$-H\"older $\lambda$-contractive mapping $T\colon B_X\to B_X$ with $\digamma(T)=\emptyset$ and $\mathrm{d}(T, B_X)>0$.
\item[\it{(2)}] For any $e\in S_X$ there is a weakly continuous $\alpha$-H\"older nonexpansive mapping $T\colon B_X \to B_X$ which is $\alpha 2^{1-\alpha}$-Lipschitz and $\digamma(T)=\{e\}$. 
\item[\it{(3)}] If $T\colon B_X \to B_X$ is $\alpha$-H\"older nonexpansive then $T$ is not onto.
\item[\it{(4)}] If $K\subset B_X$ is convex and $T\colon K\to K$ is nonexpansive then for any $\varepsilon\in (0,1)$ there is a $\alpha$-H\"older $(\varepsilon + \diam(K)^{1-\alpha})$-Lipschitz map $T_\varepsilon \colon K\to K$ such that $\digamma(T_\varepsilon) = \digamma(T)$, $\mathrm{d}(T_\varepsilon, K)\leq \mathrm{d}(T, K)$ and $\| T(x) - T_\varepsilon(x)\| \leq \varepsilon$ for all $x\in K$.
\item[\it{(5)}] If $F\colon B_X\to B_X$ is nonexpansive with $\digamma(F)=\emptyset$, then $B_X$ fails the FPP for uniformly $\alpha$-H\"older $\lambda$-contractive maps with null minimal displacement. 
\item[\it{(6)}] For $X=\mathrm{c}$ or $X=\co$, $B_X$ fails the FPP for uniformly $\alpha$-H\"older $\lambda$-contractive maps with null minimal displacement. 
\item[\it{(7)}] For $1\leq p< \infty$ there exists a convex set $K\subset B_{\ell_p}$ which fails the FPP for uniformly $\alpha$-H\"older $\lambda$-contractive affine maps. 
\item[\it{(8)}] $B_{\ell_1}$ fails the FPP for uniformly $\alpha$-H\"older $\lambda$-contractive mappings with null minimal displacement. 
\item[\it{(9)}] If $X$ contains a complemented isometric copy of a Banach space $E$ in which $B_E$ fails the FPP for (uniformly) $\alpha$-H\"older $\lambda$-contractive mappings with null minimal displacement, then $B_X$ also fails this property. 
\end{itemize}
\end{proposition}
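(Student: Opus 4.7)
The proposition bundles nine statements; the unifying engine is the rescaling trick from the proof of Proposition~1.2(ii): an $L$-Lipschitz self-map of $B_X(r)$ is automatically $L(2r)^{1-\alpha}$-Hölder of exponent $\alpha$, so by shrinking $r$ the Hölder constant can be pushed below any prescribed $\lambda<1$. Item (1) is exactly this scheme applied to a Lin--Sternfeld $L$-Lipschitz fixed-point-free $f\colon B_X\to B_X$ with $\mathrm{d}(f,B_X)>0$: set $F(x)=rf(x/r)$ on $B_X(r)$ with $r\le(\lambda/(2L))^{1/(1-\alpha)}\wedge\mathrm{d}(f,B_X)$ and pre-compose with a $2$-Lipschitz retraction $R\colon B_X\to B_X(r)$. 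Item (3) follows directly from the Hölder inequality: $\diam T(B_X)\le 2^{\alpha}<2=\diam B_X$ (the equality is witnessed by antipodal points in $S_X$ obtained via Hahn--Banach), hence $T$ cannot be surjective. Item (4) is essentially a one-line observation: for any nonexpansive $T\colon K\to K$ and $x,y\in K$,
\[
\|T(x)-T(y)\|\le\|x-y\|=\|x-y\|^{\alpha}\|x-y\|^{1-\alpha}\le\diam(K)^{1-\alpha}\|x-y\|^{\alpha},
\]
so the choice $T_{\varepsilon}:=T$ trivially satisfies all the requirements.

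\textbf{Item (2): an explicit map fixing $e$.} I would construct $T$ as a radial map of the form $T(x)=(1-\mu(x))e+\mu(x)x$ (keeping $T(x)\in\conv\{e,x\}\subseteq B_X$ automatically) with $\mu$ a scalar function depending only on $\|x-e\|$, calibrated so that the two moduli — Lipschitz with constant $\alpha 2^{1-\alpha}$ and $\alpha$-Hölder with constant~$1$ — hold simultaneously on $B_X$. The calibration is the delicate technical point: a pure linear contraction with factor $\alpha 2^{1-\alpha}$ fails the $\alpha$-Hölder nonexpansivity for $\alpha>1/2$ (since $\alpha 2^{1-\alpha}>2^{\alpha-1}$ in that regime), so $\mu$ must be genuinely nonlinear, typically piecewise chosen to match a linear regime near $e$ with a Hölder regime far from $e$ at a transition radius where the two bounds agree. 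Weak continuity is automatic once $\mu$ is a continuous function of $\|x-e\|$ and the affine coefficient in $x$ is bounded.

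\textbf{Items (5)--(9): transfer to classes and specific spaces.} For (5), starting from a nonexpansive fixed-point-free $F$ on $B_X$, I apply the rescaling of (1) on a nested sequence of balls $B_X(r_n)$ with $r_n\downarrow 0$ so that the displacements accumulate at $0$ (forcing $\mathrm{d}(T,B_X)=0$), patching the pieces into a single self-map of $B_X$ while checking uniformly across iterates that the resulting $T$ remains $\alpha$-Hölder $\lambda$-contractive and fixed-point free. For (6) in $\mathrm{c}$ or $\co$, I would use a shift-type variant of Example~3.2, e.g.\ $T(x)=(0,|t_1|^{\alpha},|t_2|^{\alpha},\dots)$: the orbit of $0$ is $(1,\dots,1,0,0,\dots)$ yielding approximate fixed points with displacement tending to $0$ in the $\mathrm{c}$/$\co$ norm, while any fixed point would have to satisfy $t_{n+1}=|t_n|^{\alpha}$ with $t_1=0$, which has no solution in $\co$. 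For (7), an affine wide-$(s)$ construction in $\ell_p$ as in Theorem~3.6(2), adapted via the Hölder rescaling of (1), supplies the required $K$ and $T$. For (8), I pair a classical isometry-type fixed-point-free map on $B_{\ell_1}$ with a Hölder rescaling. Finally, (9) is a pullback: if $P\colon X\to E$ is the bounded projection onto the isometric copy of $E$, then $T:=F\circ P|_{B_X}$ inherits the Hölder modulus (up to $\|P\|$, which equals~$1$ if the copy is isometrically complemented), fixed-point freeness, and null minimal displacement from $F$.

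\textbf{Main obstacle.} The genuinely hard point is the combination (\emph{uniformly} $\alpha$-Hölder $\lambda$-contractive)$+$($\digamma(T)=\emptyset$)$+$($\mathrm{d}(T,B_X)=0$) required in (5), (6) and (8). A single rescaled copy of a fixed-point-free Lipschitz map produces only \emph{positive} minimal displacement, so one must either exploit the sequence structure of $\mathrm{c}$, $\co$, $\ell_1$ (shift-like maps on basis elements provide the vanishing-displacement orbits) or glue nested rescaled maps while controlling the Hölder constant across the pieces. The uniformity condition of Definition~1.1(5) is particularly delicate because naive iteration of an $\alpha$-Hölder contraction degrades the Hölder exponent by a factor of $\alpha$ per step; the base map must therefore have the \emph{same} Hölder estimate for all its iterates, typically achieved by a power-of-$\alpha$ construction in the spirit of Example~3.2.
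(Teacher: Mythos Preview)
Your arguments for (3) and (4) are actually cleaner than the paper's: the diameter bound $\diam T(B_X)\le 2^{\alpha}<2$ settles (3) in one line, and for (4) the trivial choice $T_{\varepsilon}:=T$ indeed meets every listed requirement; the paper instead writes down a genuinely perturbed $T_{\varepsilon}$ via a convex combination with the identity, which is unnecessary for the statement as written.

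However, there are real gaps elsewhere. In (2) your radial map $T(x)=(1-\mu(x))e+\mu(x)x$ with $\mu$ a function of $\|x-e\|$ is \emph{not} weakly continuous: the norm is weakly lower semicontinuous but not weakly continuous, so $x\mapsto\mu(\|x-e\|)$ fails the required continuity. The paper's construction avoids this by using a support functional: pick $\varphi\in X^{*}$ with $\varphi(e)=1=\|\varphi\|$ and set $T(x)=\big((1+\varphi(x)^{2})/2\big)^{\alpha}e$. This is a scalar function of the weakly continuous quantity $\varphi(x)$, with range on the ray through $e$.

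Your ``main obstacle'' is a phantom. In (5) the hypothesis is that $F$ is \emph{nonexpansive} on $B_{X}$, and every nonexpansive self-map of a bounded closed convex set has null minimal displacement (approximate fixed points via $\lambda F$ with $\lambda\uparrow 1$). Hence a \emph{single} rescaling suffices: with $2r^{1-\alpha}\le\lambda$, the map $T(x)=rF(R(x)/r)$ (where $R\colon B_{X}\to B_{X}(r)$ is the radial $2$-Lipschitz retraction) already has $\mathrm{d}(T,B_{X})=0$, and since $R$ restricts to the identity on $B_{X}(r)$ one gets $T^{n}(x)=rF^{n}(R(x)/r)$, giving the uniform estimate $\|T^{n}x-T^{n}y\|\le\|R(x)-R(y)\|\le\lambda\|x-y\|^{\alpha}$ for every $n$. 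No nested-ball gluing is needed. Consequently (6) reduces to exhibiting a single nonexpansive fixed-point-free $F$ on $B_{\co}$ or $B_{\mathrm{c}}$; the paper uses $F(t_{1},t_{2},\dots)=(1,0,|t_{1}|,|t_{2}|,\dots)$. Your candidate $T(x)=(0,|t_{1}|^{\alpha},|t_{2}|^{\alpha},\dots)$ is wrong: it fixes $0$.

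Finally, in (9) ``complemented isometric copy'' does not force $\|P\|=1$, so $F\circ P$ need not map $B_{X}$ into $B_{E}$; the paper absorbs $\|P\|$ by a further rescaling $F_{r}(x)=rF(x/r)$ with $r$ chosen so that $(2r)^{1-\alpha}\|P\|\le\lambda$, composed with a retraction $B_{E}\to B_{E}(r)$.
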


\begin{proof} (1) The proof is contained in the proof of Proposition \ref{prop:1}. 

(2) By the geometric form of Hahn-Banach theorem (see e.g. \cite{FHHMZ}), there is $\varphi\in X^*$ so that $\varphi(e)=1$ and $\varphi(x)<1$ for all $x\in X$ with $\| x \|<1$. One can readily verify that $\| \varphi\| \leq 1$. Then the map $T\colon B_X \to B_X$ given below meets the described properties
\[
T(x) = \Big( \frac{1 + \varphi^2(x)}{2}\Big)^\alpha\cdot e,\quad x\in B_X.
\] 

(3) Suppose $T$ maps onto $B_X$. For $x\in B_X$ pick $y\in B_X$ so that $x=T(y)$. Hence $\| T(0) - x\| =\| T(0) - Ty\| \leq \| y\|^\alpha\leq 1$. So, $\| T(0) - x\| \leq 1$ for all $x\in B_X$. Since $0$ is the only vector in $B_X$ with such property, we have $T(0)=0$. Consequently, $T^{-1}(S_X)\subset S_X$ (cf. \cite[Theorem 2.3]{CKOW}). Fix $x\in T^{-1}(S_X)$ and choose $y\in B_X$ so that $T(y) = - T(x)$. Then 
\[
2=\| T(x) - (-T(x)) \| = \| T(x) - T(y)\| \leq \| x - y\|^\alpha\leq \| x\|^\alpha + \|y\|^\alpha\leq 2. 
\]
This implies $\| x - y\|= 2^{1/\alpha}>2$, a contradiction (since $\alpha<1$ and $\| x - y\| \leq 2$).

\medskip 

(4) For $x\in K$ define $T_\varepsilon\colon  K\to K$ by
\[
T_\varepsilon(x) = \frac{\varepsilon \|x\|^\alpha}{4(1 + \| x\|^\alpha)} x  + \Big( 1- \frac{\varepsilon\| x\|^\alpha}{4(1 + \| x\|^\alpha)}\Big) T(x). 
\]
Direct calculation shows that $T_\varepsilon$ fulfills the stated properties.

\medskip 

(5) Fix $r>0$ so that $2r^{1-\alpha}\leq \lambda$ and set $K=\{ x\in B_X \colon \| x \| \leq r\}$. Next consider the $2$-Lipschitz retraction $R\colon B_X\to K$ given $R(x) = x$ if $\| x\| \leq r$ and $R(x) = rx/\| x\|$ for $r\leq \| x \| \leq 1$. Then the mapping $T\colon B_X \to B_X$ given by $T(x) =r  F(R(x)/r)$ satisfies $\digamma(T)=\emptyset$, $\mathrm{d}(T, B_X)=0$, 
\[
\| T(x) - T(y)\| \leq \| R(x) - R(y)\| \leq \lambda \| x - y\|^\alpha
\]
and, consequently, $T$ is uniformly $\alpha$-H\"older $\lambda$-contractive.

\smallskip 

(6) It suffices to apply  the result (5) for the map $F\colon B_X\to B_X$ given by
\[
F(t_1, t_2, t_3,\dots) = (1, 0, |t_1|, |t_2| , |t_3| ,\dots).
\]

\smallskip 

 (7) Impelled by the proof of Theorem \ref{R:6sec3} we let
\[
K=\Big\{ x=(t_i)_{i=1}^\infty \in \ell_p \colon t_i \geq 0\text{ and } \sum_{i=1}^\infty t_i = \frac{\lambda^{\frac{p}{1-\alpha}}}{2}\Big\}.
\]
Then $K$ is a nonempty convex subset of $B_{\ell_p}$. Notice however that $K$ is closed only if $p=1$. Let us verify the stated properties for $K$. Let $\{e_i\}_{i=1}^\infty$ denote the unit vector basis of $\ell_p$. Consider $F\colon K\to K$ to be the right shift on $\{e_i\}_{i=1}^\infty$. Then clearly $F$ is affine and fixed-point free. Since $K$ is convex, $\mathrm{d}(T, K)=0$. Now for $x=(t_i)_{i=1}^\infty$ and $y=(s_i)_{i=1}^\infty$ in $K$, note that $t_i + s_i\leq \lambda^{\frac{p}{1-\alpha}}<1$. Hence for any $n\in \mathbb{N}$, 
\[
\begin{split}
\| F^n(x) - F^n(y)\|_p& 
\leq \Bigg( \sum_{i=1}^\infty | t_i| +|s_i|\Bigg)^{\frac{1 - \alpha}{p}} \| x - y\|_p^{\alpha}\leq \lambda \| x - y\|_p^{\alpha}.
\end{split}
\]

(8) Let $\theta=\sqrt{\alpha}$ and set
\[
K=\Bigg\{ x=(t_i)_{i=1}^\infty \in \ell_1 \colon t_i \geq 0\text{ and } \sum_{i=1}^\infty t_i  = 4^{-1}\Big(\frac{\lambda}{8^\theta}\Big)^{\frac{1}{1-\theta}}\Bigg\}.
\]
Set $r = 4^{-1}\big(\lambda/8^\theta\big)^{\frac{1}{1-\theta}}$. Note that $K=S_{\ell_1}^{+}(r)$ (the positive face of $S_{\ell_1}(r)$). Let $S$ be the right shift operator on $\ell_1$. Define $F\colon K\to K$ by $F=S|_K$. Also let $G\colon B_{\ell_1} \to B_{\ell_1}(r)$ be a $2$-Lipschitz retraction (see (5)). Now the idea is to take a Lipschitz retraction from $B_{\ell_1}(r)$ onto $K$. A direct way to do this is as in \cite{AC}. Here we will only sketch the arguments.  Let $\{ e_j\}_{j=1}^\infty$ be the unit basis of $\ell_1$. Consider the set $\Gamma=\big\{ x=(t_j)_{j=1}^\infty\in B_{\ell_1} \colon r/2\leq \| x\|_1 < r\big\}$ and define $\iota \colon \Gamma \to \mathbb{N}$ by setting 
\[
\iota(x) = \min\Bigg\{ j\in \mathbb{N} \colon \sum_{k=j+1}^\infty | t_k| < r - \| x\|_1\Bigg\}, \quad x=(t_j)_{j=1}^\infty \in \Gamma.
\]
Also let $\mu \colon \Gamma \to (0,1]$ be such that, for all $x\in \Gamma$, 
\[
\mu(x) | t_{\iota(x)}| + \sum_{k=\iota(x) +1}^\infty | t_k| = r - \| x \|_1,
\]
and, finally, define $Q\colon \overline{\Gamma} \to B_{\ell_1}$ by 
\[
Q(x)= \left\{ 
\begin{aligned}
&\mu(x) t_{\iota(x)} e_{\iota(x)} + \sum_{k=\iota(x) +1}^\infty t_k e_k &\text{ if } \| x\|_1<r,\\
&0     & \text{ if } \|x\|_1=r,
\end{aligned}
\right.
\]
where $\overline{\Gamma}$ is the closure of $\Gamma$. By proceeding exactly as in \cite[pp. 74-75]{AC}, it can readily seen that $Q$ is $3$-Lipschitzian. An $8$-Lipschitz retraction $R_2 \colon B_{\ell_1}(r) \to S_{\ell_1}(r)$ is then defined as follows: 
\[
R_1(x)=\left\{ 
\begin{aligned}
(r - 2\| x\|_1) e_1 + 2S(x)\quad &\text{ if } \| x\|_1 \leq r/2,\\
(I - Q)(x) + 2S\big( Q(x)\big)\quad &\text{ if } r/2\leq \| x\|_1 \leq r.
\end{aligned}
\right.
\]
Finally, consider the retraction $R_1\colon S_{\ell_1}(r) \to K$ given by $R_1((t_j)_{j=1}^\infty ) = ( |t_j| )_{j=1}^\infty$, and define $T\colon B_{\ell_1}\to K$ by $T:= F\circ R_1\circ R_2 \circ G$. Clearly $\digamma(T)= \emptyset$ and $\mathrm{d}(T, B_{\ell_1}) \leq \mathrm{d}(F, K)=0$. Hence $T$ is $\alpha$-H\"older nonexpansive and, since $G$, $R_1$ and $R_2$ are retractions, one also has $\|T^n(x) - T^n(y)\| \leq \| x - y\|^{\alpha}$ for all $x, y\in B_{\ell_1}$.

\smallskip 

(9) Let $Y$ denote the isometric copy of $E$ in $X$. By assumption there exist a (uniformly) $\alpha$-H\"older nonexpansive mapping $F\colon B_Y \to B_Y$ with $\mathrm{d}(F, B_Y)=0$ and $\digamma(F)=\emptyset$,  and a projection $P\colon X\to Y$. Observe that if $r>0$ is such that $r^{1-\alpha} 2^{1- \alpha}\| P\| \leq \lambda$, then the map $F_r \colon B_Y(r) \to B_Y(r)$ given by $F_r(x) = r F(x/r)$ satisfies $\digamma(F_r)=\emptyset$, $\mathrm{d}(F_r, B_Y(r))=0$ and $\| F_r^n(x) - F_r^n(y)\| \leq \lambda \| x - y\|^\alpha$ for all $x, y\in B_Y(r)$ (and every $n\in\mathbb{N}$ in the uniform case). Thus taking a $2$-Lipschitz retraction $R\colon Y\to B_Y(r)$, the mapping $T\colon B_X \to B_X$ defined by $T:= F_r\circ R \circ P$ fulfills the stated properties. 
\end{proof}


\smallskip 

\begin{remark}\label{R:2sec4} It worths to point out that when $\alpha=1/2$ the mapping $T$ given in part {\it{(2)}} above is $\sqrt{2}/2$-contractive and satisfies
\[
T^n(x) =\Bigg( \sum_{i=1}^n \frac{1}{2^i} + \frac{\varphi^2(x)}{2^n}\Bigg)^{1/2}\cdot e,\quad x\in B_X,\, n\in \mathbb{N}. 
\]
\end{remark}

\begin{remark}\label{R:3sec4} Let $X\in\{\mathrm{c},\,\co\}$. As in {\it{(7)}}, case $p=1$, one can find $K\in \mathcal{B}(B_X)$ and a uniformly $\alpha$-H\"older $\lambda$-contractive affine mapping $T\colon K \to K$ with $\digamma(T)=\emptyset$. For example, take $K=B^+_X$ and pick a sequence $(\alpha_n)_{n=1}^\infty\subset (0, 1)$ with $\lim_n \alpha_n=1$. Set $\beta_n:=1 - \alpha_n$. Recall the Schauder basis for $\mathrm{c}$ is $\{ e_n\}_{n=0}^\infty$ with $e_0:=(1,1,\dots)$ and, for $n\geq 1$, $e_n$ is the $n$-\textrm{th} canonical unit vector in $\co$. Hence, any $x=(t_j)_{j=1}^\infty\in \mathrm{c}$ has the following basis expansion: $x = \sum_{j=0}^\infty e^*_j(x) e_j$ with $e^*_0(x)= \lim_{j\to \infty} t_j$ and $e^*_j(x)= t_j$,\, $j\in \mathbb{N}$. Now define $T\colon B_X^+\to B_X^+$ by
\[
T(x)= \sum_{n=1}^\infty \alpha_n e^*_n(x) e_n  + \sum_{n=1}^\infty \beta_n e_n.
\]
It is easily seen that $\digamma(T)=\emptyset$ and $\mathrm{d}(T, B_X^+)=0$. Moreover, we have 
\[
\| T^n(x) - T^n(y)\|_\infty \leq \| x - y\|_\infty \leq \| x - y\|_\infty^\alpha\quad\text{for all }x, y\in B_X^+.
\]
\end{remark}


\begin{remark} Examples of spaces $X$ as in (9) include non-polyhedral Lindenstrauss spaces (cf. \cite[Proposition 3.1]{CMP} and \cite[Theorem 4.3]{CMPV}). Recall a {\it Lindenstrauss space} is a real Banach space $X$ for which $X^*=L_1(\mu)$ for some measure $\mu$. 
\end{remark}

\smallskip 

\begin{remark}\label{R:5sec4}  Goebel and Kirk, in \cite{GK0},  have built an example of an asymptotically nonexpansive mapping $T\colon B_{\ell_2}\to B_{\ell_2}$ which is not nonexpansive and and has $0$ as a unique fixed point. More precisely, letting $A_i = (1-1/i^2)$ and $\kappa_n = 2\prod_{i=2}^n A_i$ one has $\kappa_n\to 1$ and $\| T^n(x) - T^n(y)\|_2 \leq \kappa_n \| x - y\|_2$ for all $x, y\in B_{\ell_2}$, $n\in \mathbb{N}$. Here $\| \cdot\|_2$ denotes the $\ell_2$-norm. It is clear that this map satisfies
\[
\| T^n(x) - T^n(y)\| \leq \kappa_n 2^{1-\alpha} \| x - y\|^\alpha, \quad\forall x, y\in B_{\ell_2},\, \alpha\in (0,1),\, n\in \mathbb{N}.
\]
Notice however that it is not possible, in principle, to suppress the factor $2^{1-\alpha}$ to get an assymptotically $\alpha$-H\"older nonexpansive map which is not Lipschitz. 
\end{remark}

\smallskip 

\begin{proposition}\label{R:6sec4} For any $\varepsilon, \alpha\in (0,1)$ there exists an asymptotically $\alpha$-H\"older nonexpansive mapping $T\colon B_{\ell_2} \to B_{\ell_2}$ which is not Lipschitzian and $T(0)=0$. 
\end{proposition}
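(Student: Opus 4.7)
The plan is to construct $T$ as a ``radial rank-one'' map of the form $T(x) = \psi(\|x\|)\, e_1$ for a carefully chosen scalar function $\psi \colon [0,1] \to [0,1/2]$ with $\psi(0) = 0$, where $e_1$ is the first coordinate vector of $\ell_2$. Explicitly I would take $\psi(t) := \tfrac{1}{2}\bigl(1 - (1-t)^\alpha\bigr)$. Using the classical inequality $|a^\alpha - b^\alpha| \leq |a-b|^\alpha$ for $a,b \geq 0$, one obtains that $\psi$ is $\alpha$-H\"older on $[0,1]$ with constant $1/2$, so $T$ is $\alpha$-H\"older nonexpansive on $B_{\ell_2}$, $T(0) = 0$, and $T$ maps $B_{\ell_2}$ into itself (indeed into the segment $[0,1/2]\cdot e_1$). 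To see $T$ is not Lipschitz, I would compare $T(e_1)$ and $T((1-\eta)e_1)$: the relevant ratio behaves like $\eta^{\alpha-1}/2 \to \infty$ as $\eta \to 0^+$.

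The heart of the argument is the iteration. Since $T(B_{\ell_2}) \subseteq J := \{t e_1 : t \in [0, 1/2]\}$, one checks inductively that $T^n(x) = \psi^{(n)}(\|x\|)\, e_1$, so the bound on $\|T^n x - T^n y\|$ reduces to a scalar bound on $|\psi^{(n)}(s) - \psi^{(n)}(t)|$ for $s, t \in [0, 1]$. The crucial feature of this $\psi$ is that on the subinterval $[0, 1/2]$ it is smooth with $\psi'(t) = \tfrac{\alpha}{2}(1-t)^{\alpha-1}$, whose maximum over $[0, 1/2]$ is $\lambda := \alpha \cdot 2^{-\alpha} < 1$. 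Hence $\psi|_{[0,1/2]}$ is a strict Lipschitz contraction, and its $(n{-}1)$-fold iterate is $\lambda^{n-1}$-Lipschitz on $[0,1/2]$. Composing this Lipschitz estimate for the ``inner'' iterates with the H\"older estimate for the outer application of $\psi$ gives $\|T^n x - T^n y\| \leq (\lambda^{n-1}/2)\,\|x-y\|^\alpha$ for every $n \geq 1$ and every $x,y \in B_{\ell_2}$. Since $\lambda^{n-1}/2 < 1$ for all $n$, any decreasing null sequence $(\kappa_n) \subset (0,1]$ (take for instance $\kappa_n = 1/n$) satisfies $\|T^n x - T^n y\| \leq (1+\kappa_n)\|x-y\|^\alpha$, which is exactly the asymptotic $\alpha$-H\"older nonexpansive property.

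The main obstacle to overcome is that na\"ively iterating an $\alpha$-H\"older map degrades the H\"older exponent to $\alpha^n$, which is fatal for small $\|x-y\|$ since $\|x-y\|^{\alpha^n} \gg \|x-y\|^\alpha$ as $\|x-y\| \to 0^+$. The design above is tuned precisely to circumvent this: the H\"older (non-Lipschitz) behaviour of $\psi$ is localized near $t = 1$, whereas the image $T(B_{\ell_2})$ lives inside $[0,1/2]$ on which $\psi$ is a genuine Lipschitz contraction. So there is exactly one H\"older step (the outer $\psi$), and all subsequent iterations compose as Lipschitz maps, preserving the exponent $\alpha$ while contracting the multiplicative constant geometrically to zero.
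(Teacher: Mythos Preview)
Your construction is correct and genuinely different from the paper's. The paper follows the Goebel--Kirk paradigm: it defines $F(t_1,t_2,\dots)=(0,t_1^\alpha,A_2 t_2,A_3 t_3,\dots)$ on $B_{\ell_2}^{+}$ (with $A_i=1-1/i^2$) and then composes with the nonexpansive retraction $R\colon B_{\ell_2}\to B_{\ell_2}^{+}$, $R((t_n))=(t_n^{+})$; the H\"older ingredient lives in the first shifted coordinate and the iterates inherit the Goebel--Kirk constants $2\prod_{i=2}^{n}A_i\searrow 1$. Your map, by contrast, is a radial rank-one map $T(x)=\psi(\|x\|)\,e_1$, reducing everything to one-variable calculus. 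Because the non-Lipschitz behaviour of $\psi$ is confined to a neighbourhood of $t=1$ while the range $\psi([0,1])\subset[0,1/2]$ lies in the region where $\psi$ is a genuine $\lambda$-contraction ($\lambda=\alpha\,2^{-\alpha}<1$), you get $\|T^n x-T^n y\|\le \tfrac{\lambda^{\,n-1}}{2}\|x-y\|^\alpha$, so in fact your $T$ is even \emph{uniformly} $\alpha$-H\"older $\tfrac12$-contractive (hence a fortiori asymptotically $\alpha$-H\"older nonexpansive). This is simpler and gives a stronger conclusion; the paper's construction, on the other hand, is closer in spirit to the classical asymptotically nonexpansive examples in that the iterate constants honestly approach $1$ from above rather than sitting below $1$ from the start, and it exploits the infinite-dimensional shift structure rather than collapsing to a segment.
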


\begin{proof} Let $(A_i)_{i=1}^\infty$ be as above. Define a mapping $F\colon B_{\ell_2}^+ \to B_{\ell_2}^+$ by 
\[
F(t_1, t_2, \dots)= (0, t_1^\alpha, A_2 t_2, A_3t_3, \dots).
\]
Then $F$ is $\alpha$-H\"older $2$-Lipschitz and $F(0)=0$. Moreover, direct calculation shows that $F$ is asymptotically $\alpha$-H\"older nonexpansive with sequence $\{ 2\prod_{i=2}^n A_i\}_n$. Let us consider the nonexpansive retraction $R\colon B_{\ell_2}\to B_{\ell_2}^+$ given by $R\big( (t_n)_{n=1}^\infty\big) = ( t^+_n)_{n=1}^\infty$. Now, for $x=(t_n)_{n=1}^\infty\in B_{\ell_2}$, set $T(x)= F(R(x))$. It is easy to see that $T$ is asymptotically $\alpha$-H\"older nonexpansive, $\digamma(T)=\{0\}$ and is not Lipschitzian. 
\end{proof}

\vskip .2cm
We now proceed to solve partially issue ($\mathcal{Q}3$) for $X=\ell_2$. We first prove the following:

\begin{proposition} Let $X$ be a Banach space. Assume that $B_X$ fails the FPP for Lipschitz maps with null minimal displacement. Then for any $\alpha\in (0,1)$ and $\lambda>0$ there exists a fixed-point free $\alpha$-H\"older $\lambda$-Lipschitz mapping $T\colon B_X\to B_X$ which has null minimal displacement.
\end{proposition}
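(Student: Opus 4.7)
The strategy is the standard \emph{rescale-and-retract} trick already exploited in the proof of Proposition \ref{prop:1}(ii) and Proposition \ref{R:1sec4}(5),(9): start from the ambient Lipschitz counterexample guaranteed by the hypothesis and compress it into a small ball so that its Lipschitz modulus is absorbed by a factor coming from the diameter, producing the desired H\"older modulus.

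Concretely, invoke the hypothesis to pick a fixed-point free $L$-Lipschitz map $F\colon B_X\to B_X$ with $\mathrm{d}(F,B_X)=0$. Fix $\alpha\in(0,1)$ and $\lambda>0$ and choose $r\in (0,1]$ small enough that
\[
L\,(2r)^{1-\alpha}\,2^{\alpha}\leq \lambda,
\]
which is possible since $1-\alpha>0$. Define the rescaled map $F_r\colon B_X(r)\to B_X(r)$ by $F_r(x)=rF(x/r)$; then $F_r$ is still $L$-Lipschitz, has no fixed point in $B_X(r)$, and satisfies $\mathrm{d}(F_r,B_X(r))=0$, since any approximate fixed-point sequence $(x_n)\subset B_X$ for $F$ yields $(rx_n)\subset B_X(r)$ with $\|rx_n-F_r(rx_n)\|=r\|x_n-F(x_n)\|\to 0$.

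Next, take a $2$-Lipschitz retraction $R\colon B_X\to B_X(r)$ (which exists in any Banach space) and set
\[
T:=F_r\circ R\colon B_X\longrightarrow B_X(r)\subset B_X.
\]
Any fixed point of $T$ must lie in $B_X(r)$, where $R$ is the identity, hence it would be a fixed point of $F_r$; so $\digamma(T)=\emptyset$. Since $T$ and $F_r$ agree on $B_X(r)$, we have $\mathrm{d}(T,B_X)\leq \mathrm{d}(F_r,B_X(r))=0$. Finally, for all $x,y\in B_X$, using that $R(x),R(y)\in B_X(r)$ so $\|R(x)-R(y)\|\leq 2r$, and that $R$ is $2$-Lipschitz,
\[
\|T(x)-T(y)\|\leq L\,\|R(x)-R(y)\|\leq L\,(2r)^{1-\alpha}\,\|R(x)-R(y)\|^{\alpha}\leq L\,(2r)^{1-\alpha}\,2^{\alpha}\,\|x-y\|^{\alpha}\leq \lambda\,\|x-y\|^{\alpha}.
\]

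The argument is essentially routine; the only point that requires a bit of care is the quantitative choice of $r$ so that the H\"older-Lipschitz constant falls below the prescribed $\lambda$ rather than merely some constant depending on $L$. Once $r$ is tuned as above, everything else is a direct verification.
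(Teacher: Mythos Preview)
Your proof is correct and follows essentially the same rescale-and-retract argument as the paper: pick the fixed-point free Lipschitz map, compress it to a small ball $B_X(r)$ via $F_r(x)=rF(x/r)$, compose with a $2$-Lipschitz retraction $R\colon B_X\to B_X(r)$, and tune $r$ so that the resulting H\"older constant drops below $\lambda$. Your condition $L(2r)^{1-\alpha}2^{\alpha}\le\lambda$ is the same as the paper's $2Lr^{1-\alpha}\le\lambda$, and the remaining verifications (no fixed points, null minimal displacement, H\"older estimate) match line for line.
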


\begin{proof} By assumption there is a $L$-Lipschitz mapping $S\colon B_X \to B_X$ such that $\digamma(S)=\emptyset$ and $\mathrm{d}(S, B_X)=0$. Choose $r>0$ so that $2Lr^{1-\alpha}\leq \lambda$. Next define $S_r\colon B_X(r) \to B_X(r)$ by $S_r(x) = r S(x/r)$. Then $\digamma(S_r)=\emptyset$ and $\mathrm{d}(S_r, B_X(r))\leq r \mathrm{d}(S, B_X)=0$. In addition, for every $x, y\in B_X(r)$ we have
\[
\| S_r(x) - S_r(y)\|\leq L (2r)^{1-\alpha}\| x - y\|^\alpha.
\]
Finally, take $R\colon B_X\to B_X(r)$ to be a $2$-Lipschitz retraction and define $T\colon B_X\to B_X$ by $T(x) = S_r(Rx)$. Note that $\digamma(T)=\emptyset$ and $\mathrm{d}(T, B_X)=0$. Moreover, 
\[
\|T(x) - T(y)\|\leq L (2r)^{1-\alpha} \| Rx - Ry\|^\alpha\leq \lambda \| x - y\|^\alpha.
\]
\end{proof}

This result combined with the next one provides the mentioned solution.

\begin{proposition}\label{R:8sec4} Let $H$ be an infinite dimensional Hilbert space. There exists a Lipschitz mapping $T\colon B_{H}\to B_{H}$ which has no fixed points and has null minimal displacement. 
\end{proposition}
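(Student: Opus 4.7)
The plan is to construct the map explicitly on $\ell_2$ and transfer it to a general infinite-dimensional Hilbert space by the same complemented-subspace technique that appears in Proposition~\ref{R:1sec4}(9). Any such $H$ contains an orthonormal sequence $\{e_n\}_{n=1}^\infty$ whose closed linear span $Y$ is isometric to $\ell_2$ and is the range of the norm-one orthogonal projection $P\colon H\to Y$. Hence if a map $T_0\colon B_{\ell_2}\to B_{\ell_2}$ with the required properties is produced, then $T := T_0\circ P\colon B_H\to B_H$ inherits the Lipschitz constant of $T_0$, is fixed-point free (any fixed point of $T$ lies in $T_0(B_Y)\subseteq Y$, hence is already a fixed point of $T_0$), and has the same minimal displacement as $T_0$ (tested on $B_Y\subseteq B_H$).

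For the construction on $\ell_2$, I would define
\[
T_0(x_1, x_2, x_3, \dots) := \bigl( 1 - \|x\|,\, x_1, x_2, x_3, \dots\bigr).
\]
Since $\|x\|\in [0,1]$ on $B_{\ell_2}$, the leading coordinate is nonnegative, and the elementary inequality $(1-t)^2 + t^2 \leq 1$ for $t\in [0,1]$ yields $T_0(B_{\ell_2})\subseteq B_{\ell_2}$. Using the reverse triangle inequality together with the fact that, past the first coordinate, $T_0(x) - T_0(y)$ is just the right shift of $x - y$, one gets
\[
\|T_0(x) - T_0(y)\|^2 = (\|x\| - \|y\|)^2 + \|x-y\|^2 \leq 2\,\|x-y\|^2,
\]
so $T_0$ is $\sqrt{2}$-Lipschitz. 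For fixed-point freeness, $T_0(x) = x$ would force $x_{n+1} = x_n$ for all $n\geq 1$ together with $x_1 = 1 - \|x\|$; the first batch of identities makes $x$ a constant sequence, which belongs to $\ell_2$ only if every entry vanishes, but then the last identity yields $0 = x_1 = 1$, a contradiction.

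For null minimal displacement I would test on the vectors $x^{(N)} := (1+\sqrt{N})^{-1}\sum_{i=1}^N e_i$. One checks $\|x^{(N)}\| = \sqrt{N}/(1+\sqrt{N}) < 1$, so $x^{(N)}\in B_{\ell_2}$; the scaling constant is chosen precisely so that the first coordinate of $x^{(N)} - T_0(x^{(N)})$ vanishes, leaving only the entry $-(1+\sqrt{N})^{-1} e_{N+1}$. Therefore $\|x^{(N)} - T_0(x^{(N)})\| = (1+\sqrt{N})^{-1} \to 0$, finishing the proof. There is no serious obstacle here; the only design subtlety worth flagging is that the obvious sphere-valued variant $x\mapsto (\sqrt{1-\|x\|^2}, x_1, x_2, \dots)$ is only $\tfrac{1}{2}$-H\"older near the boundary rather than Lipschitz. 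Replacing the square root by the affine quantity $1 - \|x\|$ preserves the self-mapping property and rules out fixed points while keeping $T_0$ Lipschitz, which is exactly what the statement demands.
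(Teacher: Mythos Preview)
Your argument is correct, and it is genuinely different from the paper's. The paper also reduces to $\ell_2$ via a complemented isometric copy, but then works inside the closed convex set
\[
K=\Big\{\textstyle\sum_i t_i e_i : t_1\geq t_2\geq\dots\geq 0,\ \sum_i t_i^2\leq 1\Big\}
\]
and invokes a result of P.~K.~Lin \cite{Lin3} giving a uniformly asymptotically regular Lipschitz self-map of $K$ with no fixed point; asymptotic regularity yields null minimal displacement, and composing with the nonexpansive (metric) projection $\ell_2\to K$ transfers the map to $B_{\ell_2}$. Your route avoids this black box entirely: the explicit map $T_0(x)=(1-\|x\|,x_1,x_2,\dots)$ is self-contained, the Lipschitz, fixed-point-free, and $\mathrm{d}=0$ verifications are one-line each, and the test sequence $x^{(N)}$ is chosen cleanly so that only the $(N{+}1)$st coordinate survives in $x^{(N)}-T_0(x^{(N)})$. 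What the paper's approach buys is a map that is in addition uniformly asymptotically regular, which your $T_0$ is not; but the statement as written asks only for Lipschitz, fixed-point free, and null minimal displacement, and for that your construction is both shorter and more transparent.
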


\begin{proof} It is well-known (and simple to check) that $H$ contains a linear complemented isometric copy of $\ell_2$. By Proposition \ref{R:1sec4}-(9), we may assume that $H=\ell_2$. Let $(e_i)_{i=1}^\infty$ denote the unit basis of $\ell_2$ and set
\[
K = \Bigg\{ \sum_{i=1}^\infty t_i e_i \colon \, t_1\geq t_2 \geq t_3\geq \dots \geq 0\,\text{ and }\, \sum_{i=1}^\infty t^2_i\leq 1\Bigg\}.
\]
It is clear that $K\in \mathcal{B}(B_{\ell_2})$. By a result of P.K. Lin \cite{Lin3} there exists a uniformly asymptotically regular Lipschitz mapping $F\colon K\to K$ without fixed points. The asymptotic regularity of $F$ implies $\mathrm{d}(F, K)=0$. Now take a nonexpansive projection $P\colon \ell_2\to K$ and consider the mapping $T\colon B_{\ell_2}\to K$ given by $T(x) = F(Px)$. Then $T$ fulfills the properties mentioned in the statement of the proposition.  
\end{proof}

\begin{remark} T. Gallagher, C. Lennard and R. Popescu \cite{GLP} have exhibited a class of non-weakly compact sets $K\in \mathcal{B}(\mathrm{c},\| \cdot\|_\infty)$ that enjoy the FPP for nonexpansive mappings. It is not hard to show that the same is not true for the $\alpha$-HFPP. 
\end{remark}

\smallskip 

\begin{proposition}\label{R:7sec4} For any $\alpha\in (0,1)$, $B_{\ell_\infty}$ fails the FPP for uniformly $\alpha$-H\"older nonexpansive mappings having null minimal displacement.
\end{proposition}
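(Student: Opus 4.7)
The plan is to mirror the construction of Proposition \ref{R:1sec4}-(8) in the $\ell_\infty$ setting, replacing each ingredient with its $\ell_\infty$ counterpart. Fix $\alpha\in(0,1)$ and choose $r\in(0,1)$ small (to be specified), and set
\[
K := S^{+}_{\ell_\infty}(r) = \bigl\{x\in\ell_\infty : x_n\ge 0 \text{ for all } n,\ \|x\|_\infty = r\bigr\}.
\]
Let $F\colon K\to K$ be the restriction to $K$ of the right shift $(t_1,t_2,\ldots)\mapsto (0,t_1,t_2,\ldots)$ on $\ell_\infty$. Since the right shift is a linear isometry of $\ell_\infty$ that preserves positivity and the sup norm, $F$ is well-defined, affine, and isometric on $K$. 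Moreover $\digamma(F)=\emptyset$ (the only candidate fixed point, $0$, is not in $K$), and $\mathrm{d}(F,K)=0$: for the family $x^{(a)}=(r(1-a^n))_{n\ge 1}\in K$ with $a\in (0,1)$, a direct calculation gives $\|x^{(a)}-Fx^{(a)}\|_\infty = r(1-a)\to 0$ as $a\to 1^{-}$.

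Next, assemble Lipschitz retractions: $G\colon B_{\ell_\infty}\to B_{\ell_\infty}(r)$ (the radial $2$-Lipschitz retraction from part (5)), $R_0\colon B_{\ell_\infty}(r)\to B^{+}_{\ell_\infty}(r)=[0,r]^{\mathbb{N}}$ (the nonexpansive ``positivization'' $(t_j)\mapsto(|t_j|)$), and a Lipschitz retraction $R_1\colon B^{+}_{\ell_\infty}(r)\to K$ whose existence and Lipschitz constant $L$ is discussed below. Set
\[
T := F\circ R_1\circ R_0 \circ G\colon B_{\ell_\infty}\longrightarrow K\subseteq B_{\ell_\infty}.
\]
Because $F$ is an isometry of $K$ and $R_1\circ R_0\circ G$ acts as the identity on $K$, one has $T^n(x)=F^n(R_1 R_0 G(x))$ for all $n\ge 1$; hence
\[
\|T^nx - T^ny\|_\infty \;=\; \|R_1 R_0 G(x) - R_1 R_0 G(y)\|_\infty \;\le\; \min\bigl(2L\,\|x-y\|_\infty,\ r\bigr),
\]
where the second bound uses that $K$ has diameter $\le r$. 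Choosing $r\le (2L)^{-\alpha/(1-\alpha)}$ ensures the minimum is bounded by $\|x-y\|_\infty^\alpha$ for all $\|x-y\|_\infty\in[0,2]$, so $T$ is uniformly $\alpha$-H\"older nonexpansive. A fixed point of $T$ would have to lie in $K$ and be fixed by $F$, so $\digamma(T)=\emptyset$; finally $\mathrm{d}(T,B_{\ell_\infty})\le\mathrm{d}(F,K)=0$.

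The chief obstacle is the construction of the retraction $R_1\colon B^{+}_{\ell_\infty}(r)\to S^{+}_{\ell_\infty}(r)$ with a quantitative Lipschitz constant. The naive radial normalization $x\mapsto rx/\|x\|_\infty$ is \emph{not} Lipschitz near $0$ because the argmax of the sup norm is highly degenerate: a small perturbation can flip which coordinate dominates. Following the scheme of \cite{AC} employed in part (8), one handles this by splitting $B^{+}_{\ell_\infty}(r)$ into an inner region around $0$ (where mass is injected into a fixed ``spike'' coordinate to push the sup norm up to $r$) and an outer region (where one smoothly normalizes), with a controlled cut-off and interpolation in between. Tracking the resulting Lipschitz constant $L$ and solving $r\le (2L)^{-\alpha/(1-\alpha)}$ for $r$ completes the construction.
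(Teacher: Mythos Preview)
Your strategy diverges from the paper's. The paper exploits that order intervals in $\ell_\infty$ are hyperconvex, hence (by Aronszajn--Panitchpakdi) nonexpansive retracts of $B_{\ell_\infty}$; it takes $K=\{x\in\mathrm{c}:0\le x_n\le 1/N\}$ with $2\le N^\alpha$, and builds a fixed-point free map $F(x)=(1/N,\,t_2 t_1^\alpha,\,t_1,t_2,\dots)$ whose iterates are computed explicitly to verify the uniform $\alpha$-H\"older bound, with $\mathrm{d}(F,K)=0$ obtained via a scaling argument. You instead work on the non-convex positive sphere $K=S^{+}_{\ell_\infty}(r)$ with the right shift and try to manufacture a Lipschitz retraction chain $B_{\ell_\infty}\to K$ by hand. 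Your route has the virtue that the inner map $F$ is a plain isometry, so the uniform H\"older estimate reduces to the single inequality $\min(2L\,\|x-y\|_\infty,r)\le\|x-y\|_\infty^\alpha$; the paper's route avoids constructing any retraction explicitly but pays with a more intricate $F$ and iterate computation.

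However, your proof has a real gap exactly where you flag it: you do not construct $R_1$. Invoking the Annoni--Casini scheme from part~(8) is not convincing---that argument relies on the $\ell_1$ tail-mass function $\sum_{k>j}|t_k|$ and has no obvious $\ell_\infty$ analogue, so ``tracking the resulting Lipschitz constant'' is presently vacuous. Fortunately the obstacle dissolves: the map
\[
R_1(x)\;=\;x+\bigl(r-\|x\|_\infty\bigr)\mathbf{1},\qquad \mathbf{1}=(1,1,1,\dots),
\]
sends $[0,r]^{\mathbb N}$ into $K$ (each coordinate lies in $[\,r-\|x\|_\infty,\,r\,]\subset[0,r]$ and the supremum equals $\|x\|_\infty+(r-\|x\|_\infty)=r$), fixes $K$ pointwise, and is $2$-Lipschitz since $\|R_1x-R_1y\|_\infty\le\|x-y\|_\infty+\bigl|\|x\|_\infty-\|y\|_\infty\bigr|\le 2\|x-y\|_\infty$. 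With $L=2$ your condition becomes $r\le 4^{-\alpha/(1-\alpha)}$ and the rest of your argument goes through verbatim. Alternatively, you could bypass the issue entirely by following the paper and retracting onto a hyperconvex order interval rather than the sphere.
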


\begin{proof} By \cite[Theorem]{AP} we known that hyperconvex metric subspaces of $\ell_\infty$ are absolute nonexpansive retracts. So, all we need to do is to consider a hyperconvex subset $K$ of $B_{\ell_\infty}$ and built a $\alpha$-H\"older nonexpansive mapping $F\colon K\to K$ such that $\digamma(F)=\emptyset$ and $\mathrm{d}(F, K)=0$. Choose $N\in \mathbb{N}$ so that $2\leq N^\alpha$. Set
\[
K=\Big\{ x=(t_n)_{n=1}^\infty \in \mathrm{c} \colon 0\leq t_n \leq 1/N\text{ for all } n\geq 1\Big\}.
\]
Clearly $K$ is closed convex. Moreover, as an interval in $\ell_\infty$, it is hyperconvex (see \cite[Remark 4.1, p.49]
{GK}). We now define a mapping $F\colon K\to K$ as follows: for $x= (t_n)_{n=1}^\infty\in K$, put
\[
F(x)= \big(1/N, t_2 t_1^\alpha, t_1, t_2, t_3, \dots\big).
\]
It is easy to see that that $T$ is well-defined and $\digamma(F)=\emptyset$. Let us prove that $F$ is uniformly $\alpha$-H\"older nonexpansive. For $n\geq 1$, an easy inductive procedure yields
\[
F^{n+1}(x) = \Big(\frac{1}{N}, t_2\Big( \frac{t_1}{N^n}\Big)^\alpha, \frac{1}{N}, t_2\Big( \frac{t_1}{N^{n-1}}\Big)^\alpha,  \dots, \frac{1}{N}, t_2\Big( \frac{t_1}{N}\Big)^\alpha, \frac{1}{N}, t_2 t_1^\alpha, t_1, t_2, t_3, \dots\Big).
\]
Then, since $2\leq N^\alpha$, for every $x=(t_i)_{i=1}^\infty,\, y= (s_i)_{i=1}^\infty\in K$
\[
\begin{split}
\| F^n(x) - F^n(y)\|_\infty &\leq \sup_{i\in \mathbb{N},\, 1\leq j\leq n}\Big\{ \Big| t_2\Big( \frac{ t_1}{N^{n-j}}\Big)^\alpha - s_2\Big( \frac{s_1}{N^{n-j}}\Big)^\alpha\Big|, |t_i - s_i|\Big\}\\[1.2mm]
&\leq \sup_{i\in \mathbb{N}}\Big\{ (t_1^\alpha + s_2)\| x - y\|^\alpha_\infty, |t_i - s_i| \Big\}\\[1.2mm]
&\leq \| x - y\|^\alpha_\infty.
\end{split}
\]
It remains to show that $\mathrm{d}(F, K)=0$. To this end, we will consider a scaling argument. For $\lambda\in (0,1)$ and $x\in K$, set $F_\lambda(x) = F(\lambda x)$. Note that 
\[
\begin{split}
F^n_\lambda(x) = \Bigg(\frac{\lambda^0}{N}, \lambda^{n+n\alpha}t_2\Big( \frac{t_1}{N^{n-1}}\Big)^\alpha, &\frac{\lambda}{N}, \lambda^{n +(n-1)\alpha} t_2 \Big( \frac{t_1}{N^{n-2}}\Big)^\alpha, \dots\dots
 \\[1.2mm]
&\hskip .5cm\dots\dots, \frac{\lambda^{n-1}}{N}, \lambda^{n + \alpha} t_2 t_1^\alpha, \lambda^n t_1, \lambda^n t_2, \dots, \Bigg),
\end{split}
\]
where, as usual, $F^n_\lambda$ means the $n$-\textrm{th} iterate of $F_\lambda$. Therefore, after some computation,
\[
\| F^n_\lambda (x) - F^{n+1}_\lambda(x)\|_\infty \leq \lambda^n,\quad n\in \mathbb{N}.
\]
Consequently, $\mathrm{d}(F_\lambda, K) \leq \lambda^n$ for every $n\in \mathbb{N}$. Then $\mathrm{d}(F_\lambda, K)=0$. This certainly implies $\mathrm{d}(F, K)=0$. Indeed, fix $\varepsilon>0$. For some $x\in K$ we get $\| x - F_\lambda(x)\|_\infty\leq \varepsilon$, and this in turn implies
\[
\begin{split}
\mathrm{d}(F, K)& \leq \| x - F(x) \| \leq \varepsilon + \| F(x) - F(\lambda x)\|\\[1.1mm]
&\leq \varepsilon + (1- \lambda)^\alpha \| x\|^\alpha\\[1.1mm]
&\leq \varepsilon + (1- \lambda)^\alpha.
\end{split}
\]
Letting $\varepsilon\to 0$ and $\lambda \to 1$, we have $\mathrm{d}(F, K)=0$. This concludes the proof. 
\end{proof}


\section{H\"older FPP on unit balls of spaces containing $\co$ or $\ell_1$}\label{sec:5}

Theorem \ref{R:6sec3}, part (2), shows that if $X$ is any non-reflexive Banach space then some $K\in \mathcal{B}(B_X)$ fails the FPP for $\alpha$-H\"older nonexpansive mappings with null minimal displacement. But, in general, no retraction from $B_X$ to $K$ is available and, moreover, the  affine mapping obtained there is not necessarily uniformly $\alpha$-H\"older nonexpansive. In this section we show that this difficulty can be circumvented in the framework of Banach spaces containing isomorphic copies of  $\co$ or $\ell_1$. 

\medskip 

The first main result of this section is the following. 


\begin{theorem}\label{R:1sec5} Let $X$ be a Banach space containing a copy of $\co$. Then:
\begin{itemize}
\item[\it{(1)}] There exist a set $K\in \mathcal{B}(B_X)$ and a family of $\alpha$-H\"older nonexpansive maps $\{T_\alpha\}_{\alpha\in (0,1)}$ on $K$ such that $T(x)= \lim_{\alpha\to 1^-}T_\alpha(x)$ exists for all $x\in K$, and $\mathrm{d}(T_\alpha, K)\leq (1-\alpha)\displaystyle\sup_{0< t<1}t^\alpha |\ln t|$. In addition each $\digamma(T_\alpha)=\emptyset$, and either $\digamma(T)=\emptyset$ or $| \digamma(T)|=1$. \vskip .1cm
\item[\it{(2)}] For every $\alpha \in (0, 1)$ there exist $K\in \mathcal{B}(X)$ and a fixed point free  uniformly $\alpha$-H\"older $\lambda$-contractive mapping $F\colon K\to K$ satisfying $\mathrm{d}(F, K)=0$. \vskip .1cm 
\item[\it{(3)}] If $X$ is a separably Sobczyk space then for every $\alpha, \lambda \in (0, 1)$ there exists a fixed point free mapping $T\colon B_X\to B_X$ which is uniformly $\alpha$-H\"older $\lambda$-contractive and satisfies $\mathrm{d}(T, B_X)=0$.
\end{itemize}
\end{theorem}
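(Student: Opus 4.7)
The plan is to reduce everything to the known construction on $B_\co$ (part~\textit{(6)} of Proposition~\ref{R:1sec4}) and transport it into $X$ through an almost-isometric complemented copy of $\co$, in the spirit of Proposition~\ref{R:1sec4}\textit{(9)}, compensating for the lack of a genuine isometric copy by a final scaling.

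To set up the transfer, I would apply Lemma~\ref{thm:J}\textit{(ii)} with a small tolerance $\delta$ to extract a sequence $(x_n)_{n=1}^\infty \subset X$ such that the linear extension $\phi \colon \co \to Y := \overline{\mathrm{span}}\{x_n\}$ of $e_n \mapsto x_n$ is an isomorphism with $\|\phi\|, \|\phi^{-1}\| \leq 1+\delta$. Since $X$ is separably Sobczyk and $Y \cong \co$, the subspace $Y$ is complemented, and I fix a projection $P \colon X \to Y$ with norm $M$. By part~\textit{(6)} of Proposition~\ref{R:1sec4}, I pick any $\mu \in (0,1)$ together with a uniformly $\alpha$-Hölder $\mu$-contractive $F \colon B_\co \to B_\co$ having $\digamma(F)=\emptyset$ and $\mathrm{d}(F, B_\co)=0$. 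Choose $r \in (0, 1/\|\phi\|]$, scale $F$ to $F_r(a):=rF(a/r)$ on $B_\co(r)$, and conjugate to obtain $G_r := \phi F_r\phi^{-1}$ on $Z_r := \phi(B_\co(r)) \subset B_X$. A direct verification shows $F_r^n(a)=rF^n(a/r)$ and hence $G_r^n = \phi F_r^n \phi^{-1}$, so $G_r$ is uniformly $\alpha$-Hölder contractive on $Z_r$ with constant $\|\phi\|\|\phi^{-1}\|^\alpha r^{1-\alpha}\mu$, and it inherits both $\digamma(G_r)=\emptyset$ and $\mathrm{d}(G_r,Z_r)=0$ from $F$.

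To extend to all of $B_X$, let $R_0 \colon \co \to B_\co(r)$ be the standard $2$-Lipschitz radial retraction and set $\tilde R := \phi\, R_0\, \phi^{-1}\, P$. This is a Lipschitz retraction of $X$ onto $Z_r$ with constant $\leq 2\|\phi\|\|\phi^{-1}\|M$, because both $P$ and $\phi R_0 \phi^{-1}$ fix $Z_r$ pointwise. Define $T := G_r \circ \tilde R$ on $B_X$. Since $T(B_X) \subseteq Z_r$ and $\tilde R$ is the identity on $Z_r$, iteration gives $T^n = G_r^n \circ \tilde R$, and hence
\[
\|T^n(x) - T^n(y)\| \;\leq\; \|\phi\|\,\|\phi^{-1}\|^\alpha\, r^{1-\alpha}\mu \,\bigl(2\|\phi\|\,\|\phi^{-1}\|\,M\bigr)^\alpha \|x-y\|^\alpha.
\]
Choosing $\mu$ and then $r$ small enough pushes the right-hand constant below $\lambda$. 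Fixed-point-freeness and $\mathrm{d}(T,B_X)=0$ come for free: a fixed point of $T$ must lie in $Z_r$ and would then be a fixed point of $G_r$, while any almost-fixed point of $G_r$ inside $Z_r$ is preserved by $\tilde R$ and so witnesses the null displacement of $T$.

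The only genuine subtlety—and the reason this is not a verbatim instance of Proposition~\ref{R:1sec4}\textit{(9)}—is that the separably Sobczyk hypothesis delivers only an isomorphic, not isometric, complemented copy of $\co$. Consequently the distortion factor $1+\delta$, the projection norm $M$, and the retraction constant all accumulate multiplicatively in the Hölder estimate; the saving grace is the factor $r^{1-\alpha}$ obtained from the scaling, a free parameter that can be chosen arbitrarily small and thereby absorbs all accumulated constants.
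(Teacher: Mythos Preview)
Your argument is a clean proof of part \textit{(3)}, and along the way the intermediate object $G_r$ on $Z_r$ already yields part \textit{(2)} (you should say so explicitly). For part \textit{(3)} your route differs from the paper's: the paper builds an explicit \emph{affine} map $F$ on an order-interval $K=\{\sum t_n x_n : 0\le t_n\le r\}$ inside the almost-isometric $\co$-copy (so that $\mathrm d(F,K)=0$ is automatic from affineness), and then reaches $B_X$ by composing with coordinate-wise retractions $R_1$ (absolute value) and $R_2$ (truncation at $r$) and the Sobczyk projection. Your approach is more modular---you import a ready-made map from Proposition~\ref{R:1sec4}\textit{(6)} and conjugate---while the paper's is more concrete and yields an affine example; both hinge on the same scaling trick $r^{1-\alpha}$ to absorb the projection and distortion constants.

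The genuine gap is part \textit{(1)}. Nothing in your proposal produces a \emph{family} $\{T_\alpha\}_{\alpha\in(0,1)}$ on a \emph{fixed} set $K$ with the specific displacement bound $\mathrm d(T_\alpha,K)\le(1-\alpha)\sup_{0<t<1}t^\alpha|\ln t|$, pointwise convergence as $\alpha\to1^-$, and the dichotomy $|\digamma(T)|\in\{0,1\}$. These quantitative features do not come from a black-box transfer of Proposition~\ref{R:1sec4}\textit{(6)}: they are tied to an explicit construction in which the $\alpha$-dependence enters through the map $t\mapsto t^\alpha$ applied coordinate-wise. The paper defines
\[
T_\alpha\Big(\sum_i t_i x_i\Big)=(1-\delta)x_1+\sum_i(1-\delta)t_i^\alpha\,x_{i+1}
\]
on a suitable $K\subset B_X$; the displacement estimate then follows from the elementary inequality $|t^\alpha-t^\beta|\le t^{\min(\alpha,\beta)}|\ln t|\,|\alpha-\beta|$, the convergence of $T_\alpha(x)$ from the same Cauchy estimate, and the fixed-point analysis of the limit $T$ from solving the recursion $t_{i+1}=(1-\delta)t_i$. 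Your reduction strategy cannot recover any of this, so part \textit{(1)} needs a separate, hands-on argument.
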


\smallskip 

\begin{proof}

(1) Fix $\delta\in (0,1)$. Pick any null sequence $(\sigma_i)$ in $(0,1)$ so that $\sigma_1< 1-\delta$ and
\begin{equation}\label{eqn:1sec2}
\sigma_{i+1} \leq  (1 - \delta)\sigma_i,\quad i\in \mathbb{N}.
\end{equation}
Take $(x_n)_{n=1}^\infty $ to be as in (\ref{eqn:B}) with $\delta_1=\delta$. Set
\[
K=\Bigg\{ \sum_{i=1}^\infty t_i x_i \colon\, \text{each } \sigma_i \leq t_i \leq t_1= 1- \delta\,\Bigg\}.
\]
Notice $K$ clearly belongs to $\mathcal{B}(B_X)$. For $\alpha\in (0,1)$, define $T_\alpha\colon K\to  [x_i]$ by 
\[
T_\alpha\Bigg(  \sum_{i=1}^\infty t_i x_i\Bigg) = (1-\delta)x_1 + \sum_{i=1}^\infty (1-\delta )t^\alpha_i x_{i+1}. 
\]
Let us prove that $T_\alpha(K)\subseteq K$. Fix  $x=\sum_{i=1}^\infty t_i x_i\in K$. Set $\tilde{t}_i= 1-\delta$ if $i=1$ and $\tilde{t}_i = (1-\delta)t^\alpha_{i-1}$ for $i\geq 2$. So $T_\alpha(x) =\sum_{i=1}^\infty \tilde{t}_i x_i$ and for each $i\in \mathbb{N}$, we have $\sigma_i \leq t_i \leq 1- \delta$. Fix $i\in \mathbb{N}$. It follows that $\sigma_1\leq \tilde{t}_1\leq 1-\delta$ and, for $i\geq 1$, $\tilde{t}_{i+1}= (1 - \delta) t^\alpha_i \leq (1 -\delta)^{1+ \alpha}\leq 1 - \delta$ and
\[
\tilde{t}_{i+1}=(1- \delta) t_i^\alpha \geq (1 - \delta) t_i \geq (1 - \delta)\sigma_i = \sigma_{i+1}\qquad \big(\text{by } (\ref{eqn:1sec2})\big).
\]
Thus $T_\alpha(x)\in K$, and $K$ is $T_\alpha$-invariant. 

We now proceed to show that $\digamma(T_\alpha)=\emptyset$. Let $\alpha$ be fixed and assume towards a contradiction that $x = T_\alpha (x)$ for some $x=\sum_{i=1}^\infty t_i x_i\in K$. Since $(x_n)_{n=1}^\infty$ is basic, 
\[
t_1=1-\delta\quad\text{ and } \,\, t_{i+1} = (1 - \delta)t^\alpha_i\quad\text{for all } i\in \mathbb{N}. 
\]
So, $t_{n+1}= (1-\delta)^{\sum_{i=0}^n \alpha^i}$ for all $n\in \mathbb{N}$ which implies $\lim_{n\to \infty} t_{n+1} = (1-\delta)^{\frac{1}{1-\alpha}}$, contradicting that $(t_i)_{i=1}^\infty \in \co$. Hence each $\digamma(T_\alpha)=\emptyset$. 

Next we shall show that $T_\alpha$ is $\alpha$-H\"older nonexpansive and that  
\begin{equation}\label{eqn:2sec2}
\mathrm{d}(T_\alpha, K) \leq (1 - \alpha)\sup_{0< t<1}t^\alpha |\ln t|. 
\end{equation}
Fix any points $x, y\in K$, say $x=\sum_{i=1}^\infty t_i x_i$ and $y=\sum_{i=1}^\infty s_i x_i$. For $i\in \mathbb{N}$, we write $a_i = t_i -s_i$. Then from the right hand side in (\ref{eqn:B})  we deduce
\[
\| T_\alpha (x)  - T_\alpha (y)\| \leq  (1- \delta )\sup_{i\in\mathbb{N}}\big| t^\alpha_i - s^\alpha_i\big|\leq  (1- \delta)\sup_{i\in\mathbb{N}} | t_i - s_i|^\alpha,
\]
where in the last inequality we used that $t\mapsto t^\alpha$ is H\"older nonexpansive. Hence the left hand side of (\ref{eqn:B}) implies
\[
\begin{split}
\| T_\alpha(x)  - T_\alpha(y)\|^{1/\alpha} &\leq (1 - \delta)^{1/\alpha} \sup_{i\in \mathbb{N}}|a_i|
\leq \Bigg\| \sum_{i=1}^\infty a_i x_i\Bigg\| =\| x - y\|. 
\end{split}
\]
This shows that $T_\alpha$ is $\alpha$-H\"older nonexpansive.

\smallskip 

\noindent {\bf Claim.} There exists $\theta\in (0,1)$ such that $\{ T_\alpha\}_{\theta\leq \alpha<1}$ pointwisely converges to a nonexpansive map $T\colon K\to K$ as $\alpha \to 1$. Indeed, pick a number $\theta\in (0,1)$ so that $t^{\theta} \cdot | \ln t| \leq 1/2$ for all $t\in (0,1]$. Fix $x=\sum_{i=1}^\infty t_i x_i$ in $K$. For any $\alpha, \beta \in (0,1)$ 
\[
T_\alpha(x) - T_\beta(x) =(1-\delta) \sum_{i=1}^\infty \big( t^\alpha_i -t^\beta_i\big) x_{i+1},
\]
and then by (\ref{eqn:B}) we have
\[
\| T_\alpha(x) - T_\beta(x) \| \leq (1-\delta)\sup_{i\in \mathbb{N}}| t^\alpha_i - t^\beta_i |.
\]
Fix $i\in \mathbb{N}$. It is easily seen that
\begin{equation}\label{eqn:3sec6}
| t^\alpha_i - t^\beta_i| \leq t^{\min(\alpha, \beta)}_i \cdot | \ln t_i| \cdot |\alpha -\beta|.
\end{equation}
Therefore for any $\theta\leq \alpha, \beta<1$, $\| T_\alpha(x) - T_\beta(x) \| \leq \frac{1-\delta}{2}| \alpha - \beta|$. This shows $\{ T_\alpha(x)\}_{\theta\leq \alpha<1}$ is Cauchy. 

Fix a sequence $(\alpha_n)_n$  in $(0, 1)$ converging to $1$. We may assume that $\alpha_n\geq \theta$ for all $n$. Since $K$ is closed and $T_{\alpha_n}(x)\in K$, we have $T(x) = \lim_{n\to \infty} T_{\alpha_n}(x)$ for some $T(x)\in K$. Consider the map $x\mapsto T(x)$. Clearly $T\colon K\to K$ is nonexpansive for
\[
\| T_{\alpha_n}(x) - T_{\alpha_n}(y) \| \leq \| x  - y\|^{\alpha_n},\quad n\in \mathbb{N}.
\]
Now assume that $x=T(x)$ for some $x=\sum_{i=1}^\infty t_i x_i \in K$. Fix $n\in \mathbb{N}$. Then
\[
\| x - T_{\alpha_n}(x)\| \leq  \lim_{m\to \infty}\| T_{\alpha_m}(x) - T_{\alpha_n}(x)\|\leq \frac{1-\delta}{2}|1 - \alpha_n|.
\]
On the other hand,  
\[
x - T_{\alpha_n}(x) = \sum_{i=1}^\infty \big( t_{i+1} - (1-\delta)t^{\alpha_n}_i \big) x_{i+1},
\]
and hence
\[
\| x - T_{\alpha_n}(x)\| \geq (1-\delta) \sup_{i\in \mathbb{N}}\big| t_{i+1} - (1-\delta)t^{\alpha_n}_i \big|.
\]
Letting $n\to \infty$, we deduce that $x = \sum_{i=1}^\infty (1-\delta)^i x_i$.  Finally, since $\mathrm{d}(T, K)=0$, by (\ref{eqn:3sec6}) we have
\[
\mathrm{d}(T_\alpha, K) \leq (1-\delta)(1- \alpha) \sup_{0< t<1} t^\alpha|\ln t|.
\]
This proves part (1) of the theorem. 

\medskip 

(2) Let $\delta\in (0,1)$ be fixed. Pick $r>0$ so that $(2r)^{1-\alpha}\leq \lambda(1- \delta)$. We now apply Lemma \ref{thm:J}, part (ii), to get a sequence $(x_n)_{n=1}^\infty$ in $X$ so that
\[
(1-\delta)\sup_{n\geq 1}|a_n|\leq \Bigg\| \sum_{n=1}^\infty a_n x_n \Bigg\| \leq \sup_{n\geq 1}
|a_n|,
\]
for all sequence $(a_n)_{n=1}^\infty \in \co$. Let $(\alpha_n)_n$ be as in (1) and put $\beta_n:= 1- \alpha_n$. Set
\[
K=\Bigg\{ \sum_{n=1}^\infty t_n x_n \colon \text{each } 0 \leq t_n \leq r\Bigg\}. 
\]
Next we define an affine mapping $F\colon K\to K$ by
\[
F\Bigg( \sum_{n=1}^\infty t_n x_n \Bigg) = \sum_{n=1}^\infty \alpha_n  t_n x_n + r\sum_{n=1}^\infty \beta_n x_n. 
\]
Clearly $K\in \mathcal{B}(X)$ and $F$ is fixed point free. Further, it can be readily seen that
\begin{equation}\label{eqn:unif}
\| F^n(x) - F^n(y)\|  \leq \frac{(2r)^{1-\alpha}}{1 - \delta}\| x - y\|^\alpha\leq \lambda \| x - y\|^\alpha,
\end{equation}
for all $x, y\in K$ and $n\in \mathbb{N}$. This finishes the proof of (2). 

\medskip 

Let us prove (3). Take $\delta$ and $(x_n)_{n=1}^\infty$ to be as in (2). Since $X$ is separably Sobczyk, there is a projection $P\colon X\to [x_i]$. Denote by $[x_i]^+$ the positive cone of $[x_i]$ and consider the mapping $R_1\colon [x_i]\to [x_i]^+$ given by
\[
R_1\Bigg(\sum_{i=1}^\infty t_i x_i\Bigg)= \sum_{i=1}^\infty |t_i| x_i.
\]
Notice that $R_1$ defines a $(1-\delta)^{-1}$-Lipschitz retraction. Let $F$ and $K$ be as in (2), where $(1-\delta)^{-3}\|P\|(2r)^{1-\alpha}\leq \lambda$. Let us embed $[x_i]^+$ into $K$ through a $(1-\delta)^{-1}$-Lipschitz retraction map $R_2\colon [x_i]^+\to K$ given by
\[
R_2\Bigg( \sum_{i=1}^\infty t_i x\Bigg) = \sum_{i=1}^\infty \min(t_i, r) x_i.
\]
The fact that $R_2$ is $(1-\delta)^{-1}$-Lipschitz follows from the well-known fact (cf. \cite[p.1662]{DLT3}) ensuring that
\[
| s - t| = \big| \max\{s, r\} - \max\{ t, r\} \big| + \big| \min\{ s, r\} - \min\{ t,r\}\big|\,\,\,\text{for all } r,s, t\in \mathbb{R}.
\]
Finally, define $T:= F \circ R_2\circ R_1 \circ P$. It is easily shown from (\ref{eqn:unif}) that $T$ is uniformly $\alpha$-H\"older $\lambda$-contractive self-mapping of $B_X$ with $\digamma(T)=\emptyset$ and $\mathrm{d}(T,B_X)=0$. 
\end{proof}


\medskip 


Another open question in metric fixed point theory asks whether a Banach space containing a copy of $\ell_1$ fails the FPP for the class of asymptotically nonexpansive mappings. Our next result, which has a quantitative feature, solves this problem for the class of uniformly $\alpha$-H\"older nonexpansive maps. 

\begin{proposition}\label{R:2sec5} Let $X$ be a Banach space. Assume that $X$ contains an isomorphic copy of $\ell_1$. Then for every $\alpha\in (0,1)$ there exist a set $K\in \mathcal{B}(B_X)$ and a uniformly $\alpha$-H\"older nonexpansive affine mapping $T\colon K\to K$ such that $\digamma(T)= \emptyset$.
\end{proposition}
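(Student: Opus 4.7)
The strategy is to adapt the $\co$-construction of Theorem \ref{R:1sec5}(2) to the $\ell_1$ setting, with one key simplification: an $\ell_1$-equivalent basic sequence makes the right shift automatically a bounded operator in both directions, so a pure shift (rather than an affine convex combination perturbed by a fixed vector) already produces a fixed-point-free self-map, and small diameter converts uniform Lipschitzness into uniform H\"older nonexpansiveness.

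First, I would invoke Lemma \ref{thm:J}(i) with any null sequence $(\delta_n)\subset(0,1)$ (fixing, say, $\delta_1=1/2$) to extract a basic sequence $(x_n)$ in $X$ satisfying
\[
(1-\delta_1)\sum_{n=1}^\infty |a_n| \;\leq\; \Bigg\|\sum_{n=1}^\infty a_n x_n\Bigg\| \;\leq\; \sum_{n=1}^\infty |a_n|.
\]
I would then set $c=\tfrac{1}{2}(1-\delta_1)^{1/(1-\alpha)}$ and
\[
K = \Bigg\{\sum_{n=1}^\infty t_n x_n : t_n \geq 0,\ \sum_{n=1}^\infty t_n = c\Bigg\}.
\]
The upper bound above gives $\|x\|\leq c\leq 1$ for every $x\in K$, while the $\ell_1$-equivalence ensures that $K$ is closed; convexity and boundedness are clear, so $K\in\mathcal{B}(B_X)$.

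Next, define $T\colon K\to K$ by the right shift $T\bigl(\sum t_n x_n\bigr)=\sum t_n x_{n+1}$. This is linear, hence affine, and preserves $K$ since the shifted coefficients remain nonnegative with the same sum $c$. To check $\digamma(T)=\emptyset$, suppose $x=T(x)$ for some $x=\sum t_n x_n\in K$; then $x-T(x)=\sum_{n=1}^\infty (t_n-t_{n-1})x_n$ with $t_0:=0$, and the lower bound of Lemma \ref{thm:J} forces $t_n=t_{n-1}$ for every $n$, hence $t_n\equiv 0$, contradicting $\sum t_n=c>0$.

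Finally, I would verify the uniform H\"older estimate in two steps. For any $x,y\in K$, writing $x-y=\sum a_n x_n$, Lemma \ref{thm:J} yields, for every $k\in\mathbb{N}$,
\[
\|T^k(x)-T^k(y)\| \;\leq\; \sum_n |a_n| \;\leq\; \frac{\|x-y\|}{1-\delta_1},
\]
so $T^k$ is $(1-\delta_1)^{-1}$-Lipschitz on $[x_n]$ uniformly in $k$. Combining this with $\|x-y\|\leq \diam(K)\leq 2c$ (the latter holding because $\sum|t_n-s_n|\leq 2c$ for any two elements of $K$) gives
\[
\|T^k(x)-T^k(y)\| \;\leq\; \frac{(2c)^{1-\alpha}}{1-\delta_1}\,\|x-y\|^\alpha \;=\; \|x-y\|^\alpha,
\]
by the choice of $c$, which is exactly uniform $\alpha$-H\"older nonexpansiveness with $L=1$. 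The only mild subtlety is the balancing $c=\tfrac{1}{2}(1-\delta_1)^{1/(1-\alpha)}$, whose exponent $1/(1-\alpha)$ is dictated by the requirement $(2c)^{1-\alpha}/(1-\delta_1)\leq 1$; once the $\ell_1$-distortion has been absorbed via Lemma \ref{thm:J}, no additive perturbation (as in the $\co$ case) is needed.
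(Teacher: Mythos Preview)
Your proof is correct and follows essentially the same route as the paper: invoke Lemma~\ref{thm:J}(i) to obtain an almost-isometric $\ell_1$-basic sequence, take $K$ to be a small-radius positive simplex in $[x_n]$, let $T$ be the right shift, and convert the uniform Lipschitz bound $\|T^k x-T^k y\|\le (1-\delta_1)^{-1}\|x-y\|$ into $\alpha$-H\"older nonexpansiveness by choosing the simplex radius so that $(2c)^{1-\alpha}\le 1-\delta_1$. Your write-up is in fact more explicit than the paper's (which leaves the verification of the H\"older estimate to the reader), and your balancing constant $c=\tfrac12(1-\delta_1)^{1/(1-\alpha)}$ is exactly what the inequality demands.
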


\begin{proof} (i) Fix a decreasing null sequence $(\delta_n)_{n=1}^\infty$ in $(0,1)$. Let $(x_n)_{n=1}^\infty$ be a sequence in $X$ satisfying inequality (\ref{eqn:A}). Let us set
\[
K=\Bigg\{ \sum_{i=1}^\infty t_i x_i \colon \text{each }\, t_i \geq 0\,\text{ and }\, \sum_{i=1}^\infty t_i =  2^{-1}\lambda^{\frac{1}{1-\alpha}}(1 - \delta_1)^{\frac{\alpha}{1 - \alpha}}\Bigg\}. 
\]
Then $K\in \mathcal{B}(B_X)$. Now take $T\colon K\to K$ to be the right shift on $(x_n)_{n=1}^\infty$, that is, $T(\sum_{i=1}^\infty t_i x_i)=\sum_{i=1}^\infty t_i x_{i+1}$. Since $T$ is affine, $\mathrm{d}(T, K)=0$. Also, note that $\digamma(T)=\emptyset$. Moreover, using (\ref{eqn:A}) we get that $T$ is uniformly $\alpha$-H\"older $\lambda$-contractive. 
\end{proof}

\medskip 

Proposition \ref{R:1sec4}, part (7), ensures for any $\alpha\in (0,1)$ that $B_{\ell_1}$  fails the FPP for uniformly $\alpha$-H\"older nonexpansive maps with null minimal displacement. This naturally raises the question of whether this fact remains true under equivalent renormings. The following  illustrates such a possibility.

\begin{example}\label{R:3sec5} $\ell_1$ admits an equivalent norm $\|\cdot\|$ with respect to which its unit ball fails the $\alpha$-H\"older FPP. Indeed, consider the norm $\| x\| =\max\{ \| x^+\|_1, \| x^-\|_1\}$ and set $K=\big\{ x=(t_i)_{i=1}^\infty\colon t_i\geq 0,\,\sum_{i=1}^\infty t_i \leq 1\big\}$. It is known that $K$ is weak$^*$-compact and convex, and $T\colon K\to K$ given by the equation $T(x) =\big( 1- \sum_{i=1}^\infty t_i, t_1, t_2, \dots \big)$ is a $\|\cdot\|$-isometry with $\digamma(T)=\emptyset$ (\cite[Example 1]{Lim}). In particular, by Proposition \ref{R:1sec4}, part (5), $K$ fails the $\alpha$-H\"older FPP for $\alpha\in (0,1)$. Of course, this isometry is also uniformly $\alpha$-H\"older nonexpansive. Finally, we observe that $R\colon B_{(\ell_1, \|\cdot\|)} \to K$ given by $R\big((t_i)_{i=1}^\infty \big)= ( t^+_i)_{i=1}^\infty$ defines a nonexpansive retraction. Thus $T\circ R \colon B_{(\ell_1, \|\cdot\|)} \to K$ is uniformly $\alpha$-H\"older nonexpansive and has no fixed points. 
\end{example}

\medskip 

To finish up this section we highlight the following general result which follows by using the arguments of Proposition \ref{R:1sec4}-(8) in concert with those of Proposition \ref{R:2sec5}, so we omit its proof.

\begin{proposition}\label{R:4sec5} Assume that $X$ contains a complemented copy of $\ell_1$. Then for any $\alpha\in (0,1)$ there is a uniformly $\alpha$-H\"older nonexpansive mapping $T\colon B_X\to B_X$ with $\digamma(T)=\emptyset$ and $\mathrm{d}(T, B_X)=0$. 
\end{proposition}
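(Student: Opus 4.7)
Let $Y\subset X$ be the complemented copy of $\ell_1$ and $P\colon X\to Y$ a bounded linear projection. Fix $\alpha\in(0,1)$ and choose a decreasing null sequence $(\delta_n)_{n=1}^\infty$ in $(0,1)$ and a small parameter $r>0$, to be pinned down at the end. By Lemma \ref{thm:J}(i), applied inside $Y$, there exists $(x_n)_{n=1}^\infty\subset Y$ satisfying (\ref{eqn:A}); in particular the closed linear span $[x_i]$ is isomorphic to $\ell_1$ via the natural correspondence $\sum_i t_i x_i \leftrightarrow (t_i)_{i=1}^\infty$, with distortion controlled by $\delta_1$.

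The plan is to transplant the two ingredients in order. First, I would imitate Proposition \ref{R:2sec5}: consider the simplex-like set
\[
K=\Bigg\{\sum_{i=1}^\infty t_i x_i\colon t_i\geq 0,\ \sum_{i=1}^\infty t_i = r\Bigg\}\in \mathcal{B}(B_X)
\]
and take $F\colon K\to K$ to be the right shift $\sum_i t_i x_i\mapsto \sum_i t_i x_{i+1}$. Then $F$ is affine (so $\mathrm{d}(F,K)=0$), fixed-point free, and by (\ref{eqn:A}) the estimate
\[
\|F^n(x)-F^n(y)\|\leq \sum_i|t_i-s_i|\leq \frac{(2r)^{1-\alpha}}{1-\delta_1}\,\|x-y\|^\alpha
\]
shows $F$ is uniformly $\alpha$-H\"older nonexpansive provided $r$ is small enough.

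Next, I would imitate Proposition \ref{R:1sec4}(8) to manufacture a Lipschitz retraction $\rho\colon Y\to K$. Since $[x_i]$ is isomorphic to $\ell_1$, pushing the $\iota,\mu,Q$ construction of \cite{AC} through this isomorphism gives a Lipschitz retraction from the ball $B_Y(r)$ onto the ``sphere'' $\{\sum t_ix_i: t_i\in\mathbb{R},\ \sum|t_i|=r\}$, followed by the absolute-value retraction $\sum t_i x_i\mapsto\sum|t_i|x_i$ onto $K$, followed by a $2$-Lipschitz retraction of $Y$ onto $B_Y(r)$; the composition is a Lipschitz map $\rho\colon Y\to K$ with some constant $C_\rho$ depending only on $\delta_1$. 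Define
\[
T\colon B_X\to K\subset B_X,\qquad T:= F\circ \rho\circ P.
\]
Then clearly $\digamma(T)\subseteq \digamma(F)=\emptyset$ and $\mathrm{d}(T,B_X)\leq \mathrm{d}(F,K)=0$. For the uniform H\"older bound, using that $\rho$ and $P$ are Lipschitz and $F^n$ is uniformly $\alpha$-H\"older, one gets
\[
\|T^n(x)-T^n(y)\|\leq \frac{(2r)^{1-\alpha}}{1-\delta_1}(C_\rho\|P\|)^{\alpha}\|x-y\|^{\alpha}\leq \|x-y\|^\alpha
\]
for all $x,y\in B_X$, $n\in\mathbb{N}$, once $r$ is chosen so that $(C_\rho\|P\|)^\alpha(2r)^{1-\alpha}\leq 1-\delta_1$.

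The main obstacle, and the only real content beyond bookkeeping, is the construction of the retraction $\rho$ inside $Y$: the Akilov--Cobza\c{s}-type retraction in Proposition \ref{R:1sec4}(8) uses the $\ell_1$-norm explicitly (positive/negative parts, $\min/\max$ identities), and one has to verify that after transporting it through the $(1-\delta_1)$-isomorphism between $[x_i]$ and $\ell_1$, the resulting map retains a uniform Lipschitz constant and maps into the genuine set $K\subset X$ (not merely a set in $\ell_1$). Once this is done, choosing $\delta_1$ small and then $r$ small closes the estimates.
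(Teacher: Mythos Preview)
Your approach is correct and is exactly the combination the paper points to (the right-shift on a small $\ell_1$-simplex from Proposition~\ref{R:2sec5} together with the Annoni--Casini retraction from Proposition~\ref{R:1sec4}(8), pre-composed with the projection $P$); the key identity $T^n = F^n\circ\rho\circ P$ (since $\rho\circ P$ restricts to the identity on $K$) is what makes the uniform H\"older estimate go through, and you have this right.

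One small domain issue to clean up: you take $(x_n)$ from Lemma~\ref{thm:J}(i) inside $Y$, but then the retraction you build via the $\iota,\mu,Q$ machinery lives only on $[x_i]$, not on $Y$, whereas $P$ lands in $Y$. Nothing guarantees $[x_i]=Y$ or that $[x_i]$ is complemented in $Y$. The simplest fix is to dispense with Lemma~\ref{thm:J}(i) and just set $x_n:=\Phi^{-1}(e_n)$ for an isomorphism $\Phi\colon Y\to\ell_1$; then $[x_i]=Y$, the two-sided estimate $\|\Phi\|^{-1}\sum|a_i|\le\|\sum a_i x_i\|\le\|\Phi^{-1}\|\sum|a_i|$ replaces \eqref{eqn:A}, and only the final choice of $r$ changes. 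With that adjustment your sketch is complete.
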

 
 \smallskip 

\begin{corollary} Let $X$ be a Banach space with an unconditional basis. Assume that $X$ is not reflexive. Then for any $\lambda, \alpha\in (0,1)$, $B_X$ fails the FPP for uniformly $\alpha$-H\"older $\lambda$-contractive mapping with null minimal displacement. 
\end{corollary}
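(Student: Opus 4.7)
The plan is to combine James's structural theorem for unconditional bases with the two main $c_0$/$\ell_1$ results of this section, Theorem~\ref{R:1sec5}(3) and Proposition~\ref{R:4sec5}. Recall James's dichotomy: a Banach space with an unconditional basis is reflexive if and only if it contains no isomorphic copy of $c_0$ and no isomorphic copy of $\ell_1$. Since $X$ is non-reflexive, at least one of these two embeddings is automatically present.

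First I would note that $X$ is separable (having a Schauder basis), so Sobczyk's theorem tells us that every isomorphic copy of $c_0$ in $X$ is complemented; equivalently, $X$ is separably Sobczyk in the sense of Section~\ref{sec:2}. On the $\ell_1$ side, when the unconditional basis of $X$ is not shrinking --- the equivalent characterization of $\ell_1 \hookrightarrow X$ --- a standard block-basis extraction combined with the unconditionality of the basis (via the associated band projection onto the block support) produces a normalized block sequence equivalent to the unit vector basis of $\ell_1$ and spanning a complemented subspace. Consequently, under our hypotheses, $X$ admits a complemented isomorphic copy of $c_0$ or of $\ell_1$.

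Splitting into the two cases, if $c_0 \hookrightarrow X$, then since $X$ is separably Sobczyk, Theorem~\ref{R:1sec5}(3) delivers, for each $\alpha,\lambda \in (0,1)$, a uniformly $\alpha$-H\"older $\lambda$-contractive self-map of $B_X$ which is fixed-point free and has null minimal displacement. If instead $\ell_1 \hookrightarrow X$, the same conclusion follows at once from Proposition~\ref{R:4sec5} applied to the complemented $\ell_1$-copy extracted above. In either case $B_X$ fails the FPP for uniformly $\alpha$-H\"older $\lambda$-contractive maps with null minimal displacement, as required.

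The only non-cosmetic ingredient, and the one place where some care is needed, is the \emph{complementability} of the embedded $c_0$ or $\ell_1$: for $c_0$ this is immediate from Sobczyk's theorem once separability is observed, while for $\ell_1$ it is a classical consequence of unconditionality via the block-basis argument sketched above. Neither fact is new, so both should be cited from the Banach space literature rather than reproved in the note.
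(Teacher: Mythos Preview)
Your proposal is correct and follows essentially the same route as the paper: the paper splits into the cases where the unconditional basis is not shrinking (yielding a complemented copy of $\ell_1$, then invoking Proposition~\ref{R:4sec5}) versus shrinking but not boundedly complete (yielding a copy of $\co$, then invoking Theorem~\ref{R:1sec5}(3)). You are slightly more explicit than the paper in verifying the separably Sobczyk hypothesis via Sobczyk's theorem, which the paper leaves implicit; otherwise the arguments coincide.
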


\begin{proof} Assume that the basis of $X$ fails to be shrinking. Then (cf. \cite[Theorem 3.3.1]{AK}) $X$ contains a complemented subspace isomorphic to $\ell_1$. In this case the result follows directly from Proposition \ref{R:4sec5}. Assume, on the other hand, that the basis of $X$ is shrinking. Since $X$ is not reflexive, by a classical result due to R. C. James, it is not boundedly complete. By \cite[Theorem 1.c.10 (iii) $\Rightarrow$ (i)]{LT}, there is a seminormalized block basic sequence $(x_n)_{n=1}^\infty$ such that $\sup_m\|\sum_{i=1}^m x_i \|<\infty$. This and the unconditionality of the basis implies that $(x_n)_{n=1}^\infty$ is equivalent to the unit basis of $\co$. By Theorem \ref{R:1sec5}--(3) the result follows. 
\end{proof}

\smallskip 
\begin{corollary} Let $p\in [1, \infty]$. For any $\alpha\in (0,1)$ the following statements hold:
\begin{itemize}
\item[(i)] $B_{L_p}$ fails the FPP for uniformly $\alpha$-H\"older nonexpansive mappings with null minimal displacements provided $p\in \{ 1, \infty\}$. 
\item[(ii)] $B_{L_p}$  fails the FPP for $\alpha$-H\"older nonexpansive mappings with null minimal displacements whenever $1< p< \infty$. 
\end{itemize}
\end{corollary}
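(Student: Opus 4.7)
I would prove the corollary case by case, leveraging only the propositions already proved in the excerpt together with standard embeddings of sequence spaces into $L_p$.

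For part (i) with $p=1$, the space $L_1[0,1]$ contains a $1$-complemented isometric copy of $\ell_1$: take normalized indicator functions of disjointly supported measurable sets, and use the corresponding band projection. Proposition~\ref{R:4sec5} then immediately gives a uniformly $\alpha$-H\"older nonexpansive fixed-point free self-map of $B_{L_1}$ with null minimal displacement. For $p=\infty$, the space $\ell_\infty$ embeds isometrically into $L_\infty[0,1]$ as step functions associated to a countable partition of $[0,1]$ into sets of positive measure; since $\ell_\infty$ is $1$-injective this copy is $1$-complemented. Plugging this into Proposition~\ref{R:1sec4}-(9) (whose proof preserves the uniform H\"older estimate, as one sees by tracking the inequality $\|F_r^n(x)-F_r^n(y)\|\leq \lambda\|x-y\|^\alpha$) and invoking Proposition~\ref{R:7sec4} yields the conclusion.

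For part (ii), I would first dispose of $p=2$: since $L_2$ is an infinite-dimensional Hilbert space, Proposition~\ref{R:8sec4} produces a fixed-point free Lipschitz self-map of $B_{L_2}$ with null minimal displacement, and the unnamed proposition immediately preceding Proposition~\ref{R:8sec4}, applied with $\lambda=1$, upgrades it to an $\alpha$-H\"older nonexpansive example. For $1<p<\infty$ with $p\neq 2$, I would invoke Khintchine's inequality: the closed linear span $Y$ of the Rademacher functions $(r_n)$ in $L_p[0,1]$ is isomorphic to $\ell_2$ via a Khintchine isomorphism $J\colon\ell_2\to Y$, and the Rademacher projection $Q\colon L_p\to Y$ is bounded. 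Given the fixed-point free $\alpha$-H\"older $\lambda$-Lipschitz map $S\colon B_{\ell_2}\to B_{\ell_2}$ with $\mathrm{d}(S,B_{\ell_2})=0$, I would transfer it to $Y$ by $\widetilde S(y)=JS(J^{-1}y)$ on $J(B_{\ell_2})$, rescale as $\widetilde S_r(y)=r\widetilde S(y/r)$ to live on a small ball $B_Y(r)$, and compose with a Lipschitz retraction $B_{L_p}\to B_Y(r)$ built from $Q$ followed by a radial retraction onto $B_Y(r)$. The H\"older constant of the final map has the form $C\cdot r^{1-\alpha}$, where $C$ aggregates the (finite) constants $\|J\|,\|J^{-1}\|,\|Q\|,\lambda$ and the Lipschitz constant of the retraction; since $\alpha<1$, choosing $r>0$ small makes $Cr^{1-\alpha}\leq 1$, so the map is $\alpha$-H\"older nonexpansive. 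Null minimal displacement and absence of fixed points are preserved by the construction, exactly as in the proof of Proposition~\ref{R:1sec4}-(9).

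The main obstacle is precisely the subcase $1<p<\infty$ with $p\neq 2$, because $\ell_2$ does not embed isometrically into $L_p$, so Proposition~\ref{R:1sec4}-(9) cannot be quoted verbatim. The argument sketched above shows, however, that the ``isometric'' hypothesis in (9) can be relaxed to ``complemented and isomorphic'' without loss: the rescaling factor $r^{1-\alpha}$, which drives the proof of (9) anyway, absorbs any finite isomorphism and projection constants. All remaining cases reduce directly to the previously established results.
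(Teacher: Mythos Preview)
Your argument is correct, and for part~(i) it matches the paper almost exactly: the $p=1$ case is identical, and for $p=\infty$ the paper uses the hyperconvexity of $B_{\ell_\infty}$ together with the Aronszajn--Panitchpakdi retraction theorem (Theorem~\ref{thm:AP}) rather than $1$-injectivity and Proposition~\ref{R:1sec4}-(9), but these are two sides of the same coin.

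For part~(ii), however, you work harder than necessary. The paper observes (citing \cite[6.8, p.~163]{AK}) that $\ell_2$ is linearly \emph{isometric} to a complemented subspace of $L_p$ for every $1<p<\infty$: this is the classical embedding via i.i.d.\ Gaussian random variables, whose span in $L_p$ is isometric to $\ell_2$ because a linear combination of independent Gaussians is again Gaussian with $L_p$-norm proportional to its standard deviation. With an isometric complemented copy in hand, Proposition~\ref{R:1sec4}-(9) applies verbatim once Proposition~\ref{R:8sec4} and the preceding unnamed proposition furnish the required map on $B_{\ell_2}$. Your Rademacher/Khintchine route gives only an isomorphic copy, forcing you to argue that the isometric hypothesis in~(9) can be relaxed; that argument is sound (the rescaling $r^{1-\alpha}$ does absorb the isomorphism and projection constants), but it is an unnecessary detour. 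The payoff of your version is a mild strengthening of Proposition~\ref{R:1sec4}-(9) itself; the payoff of the paper's version is a one-line proof.
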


\begin{proof} (i) $\ell_1$ is isometrically isomorphic to a $1$-complemented subspace of $L_1$ (cf. \cite[p.110]{AK}), so Proposition \ref{R:4sec5} solves the case $p=1$. $L_\infty$ contains an isometric copy of $\ell_\infty$ (see \cite[p.85]{AK}), so $B_{\ell_\infty}$ is a hyperconvex metric subspace of $B_{L_\infty}$. By Theorem \ref{thm:AP} and Proposition \ref{R:7sec4} the result follows. (ii) By \cite[6.8, p.163]{AK} $\ell_2$ is linearly isometric to a complemented subspace $X$ of $L_p$. By Proposition \ref{R:8sec4} the result follows. 
\end{proof}


\medskip
\subsection{Open questions} We end this paper with the following questions: Let $X$ be an infinite-dimensional Banach space. Does $B_X$ fail the FPP for {\it uniformly} $\alpha$-H\"older nonexpansive maps with null minimal displacement if, (1) $X$ is reflexive, (2) $X$ is isomorphic to a Hilbert space or (3) $X$ is reflexive and has an unconditional basis, or then (4) $X$ is the James's space $J_2$? It should be noted that, intuitively, the general strategy used here to handle ($\mathcal{Q}3$) was to check the failure of the HFPP in a certain subset $K$ of $B_X$ and then try to properly extend it to the whole ball. In some cases this involves the availability of extension results with prescribed conditions, which seems to be the main difficulty in dealing with such a problem.

\end{large}

\renewcommand{\refname}{\section{References}}

\textsc{Department of Mathematics, Federal University of Cear\'a, Av. Humberto Monte, Fortaleza, CE 60455-369, Brazil} \par
  \textit{E-mail address}: \texttt{cleonbar@mat.ufc.br} \par
  \medskip

\end{document}